\documentclass{article}
\usepackage{amsmath}
\usepackage{mathtools}
\usepackage{amsfonts}
\usepackage{amssymb}
\usepackage{graphicx} % Required for inserting images
\usepackage{xcolor}
\usepackage{amsthm}
\usepackage{enumitem}
\usepackage{CJKutf8}
\usepackage{blkarray}
\usepackage{graphicx}
\theoremstyle{plain}
\newtheorem{theorem}{Theorem}[section]
\newtheorem*{openproblem}{Open Problem}
\newtheorem{lemma}[theorem]{Lemma}

\theoremstyle{definition} 
\newtheorem{example}{Example}[section]
\newtheorem{definition}[theorem]{Definition}
\usepackage{comment}
\theoremstyle{remark}
\newtheorem{remark}[theorem]{Remark}
\newcommand{\spans}{\operatorname{span}}
\newcommand{\Sym}{\operatorname{Sym}}
\newcommand{\SoS}{\operatorname{SoS}}
\newcommand{\RSoS}{\operatorname{RSoS}}

\usepackage{authblk}
\definecolor{lightpurple}{RGB}{110,0,190}
\newcommand{\dede}[1]{#1}
\newcommand{\cyan}[1]{#1}
\title{Noncommutative Rational Sums of Squares in Free Algebras }
\author[]{Sizhuo Yan} 
\author[]{Jianting Yang}
\author[]{Lihong Zhi}
\affil[]{State Key Laboratory of Mathematical Sciences, Academy of Mathematics and Systems Science, 
University of Chinese Academy of Science}
\date{}

\begin{document}

\maketitle
\begin{abstract}
 %    This paper introduces rational sums of squares for noncommutative polynomials in free algebras.  The main result gives, \dede{for noncommutative polynomials satisfying certain conditions,} an equivalent condition for this rational form: for every operator evaluation, the finite-dimensional compressions satisfying the prescribed conditions
 % determined by this evaluation are not negative definite. Several basic properties are also studied, including their relation to sums of Hermitian squares and their non-convexity.

%\begin{abstract}
This paper introduces rational sums of squares for symmetric noncommutative
polynomials in free algebras. Under the assumption that the polynomial admits a strictly positive scalar self-adjoint evaluation, our main result gives an operator-theoretic characterization of this class: a polynomial is a rational sum of squares   if and only if there exists a degree bound such that, for every self-adjoint operator evaluation satisfying a natural nondegeneracy condition, the compression of the evaluated polynomial to the associated finite-dimensional cyclic subspace is not negative definite. We also prove that sums of Hermitian squares  form a proper subset of rational sums of squares when the free algebra has at least two generators, whereas the two classes coincide for homogeneous polynomials. Finally, we show that the set of rational sums of squares is nonconvex when there are at least three generators, and that its complement is also nonconvex.
\end{abstract}

\section{Introduction}

% Noncommutative real algebraic geometry studies noncommutative polynomials and their evaluations at matrices and operators. It can be viewed as a noncommutative analog of real algebraic geometry and is closely related to operator theory, operator algebras, and noncommutative polynomial optimization\cyan{, see, for example,
% \cite{Schmudgen2009,Burg2016}.} \cyan{Related factorization results for noncommutative polynomials were established in~\cite{MR1815959}. For Positivstellensatz on constrained noncommutative semialgebraic sets, weighted sum-of-squares representations were developed in~\cite{MR2055751}, while the convex Positivstellensatz gives exact certificates, with degree bounds, on free spectrahedra~\cite{MR2935397}. For computational methods for noncommutative sums of Hermitian squares and related optimization problems, we refer the reader to~\cite{MR3041750,MR2580654}.
% } 

Noncommutative real algebraic geometry studies positivity and algebraic
certificates for noncommutative polynomials through their evaluations at
tuples of matrices and operators. It may be viewed as a noncommutative
analogue of real algebraic geometry and is closely connected to operator
theory, operator algebras, and noncommutative polynomial optimization; see,
for example,~\cite{Schmudgen2009,Burg2016}. Related factorization results for
noncommutative polynomials were established in~\cite{MR1815959}. For
positivity on constrained noncommutative semialgebraic sets, weighted
sum-of-squares representations were developed in~\cite{MR2055751}, while the
convex Positivstellensatz provides exact certificates, together with degree
bounds, on free spectrahedra~\cite{MR2935397}. For computational and
semidefinite programming methods for noncommutative sums of Hermitian squares
and related polynomial optimization problems, we refer the reader
to~\cite{MR3041750,MR2580654}.

%An important problem is to determine when positivity under all matrix evaluations admits an algebraic certificate.
% \cyan{One of the fundamental result} in this \cyan{field} is Helton's  sum-of-squares theorem. It states that a symmetric noncommutative polynomial that is positive semidefinite under all matrix evaluations is a sum of Hermitian squares~\cite{Helton2002positive}. This result shows a strong difference between the commutative and noncommutative cases. Noncommutative rational expressions are obtained from noncommutative polynomials by allowing inverses. In~\cite{klep2017regular}, positive regular noncommutative rational functions were studied, and it was proved that they are sums of Hermitian squares of regular rational functions. This can be viewed as a rational function analog of Helton's sum-of-squares theorem. 

One of the fundamental results in this field is Helton's sum-of-squares theorem, which states that a symmetric noncommutative polynomial that is
positive semidefinite under all self-adjoint matrix evaluations, in every
dimension, is a sum of Hermitian squares~\cite{Helton2002positive}. This
theorem reveals a striking contrast between the commutative and
noncommutative settings. Allowing inverses of noncommutative polynomials
leads to noncommutative rational expressions. In~\cite{klep2017regular}, it
was proved that every positive regular noncommutative rational function is a
sum of Hermitian squares of regular rational functions, providing a rational function analog of Helton's theorem. \cyan{For recent results on Positivstellensatz for noncommutative rational functions, we refer to~\cite{MR5030113,MR3750207,MR4308824}.}

% The present paper studies another rational form for noncommutative polynomials.  This form is motivated by \cyan{Stengle's Positivstellensatz~\cite{MR332747}} as well as an open problem of Klep and Schweighofer~\cite{IgorMarkus2007nicht}. In that problem, rationality is introduced by 
% multiplying a symmetric noncommutative polynomial on both sides by other noncommutative polynomials. This gives a different way to introduce rational forms into the study of noncommutative polynomials.

The present paper investigates a different notion of rationality for
noncommutative polynomials, motivated by Stengle's
Positivstellensatz~\cite{MR332747} and by an open problem of Klep and
Schweighofer~\cite{IgorMarkus2007nicht}. Rather than introducing inverses,
this approach uses noncommutative polynomial multipliers: a symmetric
polynomial is multiplied on the left and right by other noncommutative
polynomials, and the resulting expressions are required to satisfy a
sum-of-squares condition. This multiplier-based construction provides an
alternative way to formulate rational positivity certificates entirely
within the free polynomial algebra.

The main contributions of this paper are as follows.

\begin{enumerate}
    \item We introduce the class $\RSoS$ and establish its first structural
    properties. For every free algebra with at least two generators, we
    construct an explicit polynomial showing that
    \[
        \SoS \subsetneq \RSoS.
    \]
    We further prove that the leading homogeneous part of every $\RSoS$
    polynomial belongs to $\SoS$. Consequently, a symmetric homogeneous
    noncommutative polynomial belongs to $\RSoS$ if and only if it belongs
    to $\SoS$.

    \item Let $f=f^*$ and assume that there exists a scalar self-adjoint
    tuple $a$ such that $f(a)>0$. We prove an operator-theoretic
    characterization of $f\in\RSoS$. More precisely, $f$ is a rational sum-of-squares if and only if there exists a degree bound $d$ such that, for every
self-adjoint operator evaluation satisfying a natural nondegeneracy condition,
% the compression of the evaluated polynomial to the associated
% finite-dimensional cyclic subspace is not negative definite.   
%     if and only if there exists a positive integer $d$ such that, for every nontrivial separable 
% $k$-Hilbert space $\mathcal{H}$ with a fixed unit vector $e_1$ and every self-adjoint tuple
% $\underline{A}\in B(\mathcal{H})^m$, 
% if $g(\underline A)e_1\neq0$ for every nonzero $g\in k\langle\underline X\rangle$ with $\deg(g)\le d$, 
then the compression
 of $ f(\underline{A})$ on $W_{\underline{A}}$ is not negative definite, where  
\[
    W_{\underline{A}}
    := \spans\{\,\omega(\underline{A})e_1 \mid \deg(\omega)\le d,  ~~e_1~\text{is a fixed unit vector} \}.
\]
    The proof combines closedness of
    degree-truncated multiplier cones, strict separation of convex cones,
    and a truncated GNS construction.

    \item We show that $\RSoS$ is not convex when the free algebra has at
    least three generators. We also prove that its complement 
    is not convex whenever the free algebra has at least one generator.
    This contrasts with the commutative setting, where rational sums of
    squares coincide with nonnegative polynomials and therefore form a
    convex cone.
\end{enumerate}

\subsection{Notation}

 The notation used throughout this paper is summarized as follows. Here
  $\mathbb{N}$ denotes the set of natural numbers, $\mathbb{N}^{+}$ denotes the set of positive integers and $k$ denotes either the field of real numbers or the field of complex numbers. 
 In this work, the main objects of study are free algebras of noncommutative polynomials. Let $\underline{X} = (X_1, X_2, \ldots, X_m)$ be a tuple of noncommutative variables,  where $m$ is a positive integer. The set of all words (monomials) generated by $\underline{X}$ is denoted by $\langle\underline{X}\rangle$. The notation $k\langle\underline{X}\rangle$ denotes the free associative algebra over $k$ generated by $\underline{X}$, whose elements are noncommutative polynomials.
 A noncommutative polynomial in $k\langle \underline{X}\rangle$ has the following form:
 \[f=\sum_{i=1}^{N}\alpha_{i} \omega_i,~ \text{where} ~\omega_i\in \langle \underline{X} \rangle,~\alpha_i\in k,~N\in \mathbb{N}.\]
  The degree of a word (monomial) $\omega$ is its length, denoted by $\mathrm{deg}(\omega)$.  The degree of a noncommutative polynomial $f$ is the  maximum length of a word appearing in $f$, denoted by $\deg(f)$. \dede{The notation $k\langle\underline{X}\rangle^d$ denotes the set of all noncommutative polynomials whose degree is at most $d$}\cyan{, i.e.,}
 \[
    k\langle \underline{X}\rangle^{ d}
    :=\{f\in k\langle \underline{X}\rangle\mid \deg(f)\le d\}.
\]
 For a monomial $\omega=X_{i_1} X_{i_2} \ldots X_{i_l}$, where $ l \in \mathbb{N}$ and $i_1, \ldots, i_l \in \{1, \ldots, m\}$, its involution is defined by reversing the order of variables, i.e., $\omega^*=X_{i_l} \cdots X_{i_2}  X_{i_1}$. Then the involution of a noncommutative polynomial is defined by
 \[f^{*}=\sum_{i=1}^{N}\overline{\alpha_{i}} \omega_i^{*},~ \text{where} ~\omega_i\in \langle \underline{X} \rangle,~\alpha_i\in k,~N\in \mathbb{N},\]
 where $\overline{\alpha_i}$ is the complex conjugate of $\alpha_i$.
 A noncommutative polynomial $f$ is called symmetric if $f=f^{*}$, and the set of symmetric noncommutative polynomials is denoted by $\Sym k\langle\underline{X}\rangle$.

 A noncommutative polynomial $f$  admits a sum-of-squares (SoS)  form if there exist finitely many noncommutative polynomials $g_1, \ldots, g_r \in k\langle \underline{X}\rangle$ such that
 \[
 f = \sum_{i=1}^r g_i^{*} g_i.
 \]
 For convenience, the set of all noncommutative polynomials  that admit a sum-of-squares form is denoted by $\SoS$.

 Let $\mathcal{H}$ be a  separable $k$-Hilbert space, and let $\underline{A} = (A_1, A_2,\ldots, A_m) \in \mathrm{B}(\mathcal{H})^{m}$ denote a tuple of self-adjoint operators. The evaluation of a noncommutative polynomial $f \in k\langle \underline{X} \rangle$ at $\underline{A}\in \mathrm{B}(\mathcal{H})^{m}$ is denoted by $f(\underline{A})$. % example 

   \subsection{Background}

    In \cite{IgorMarkus2007nicht}, Klep and Schweighofer posed the following open problem.
     \begin{openproblem}\label{openproblem}
 	\cite{IgorMarkus2007nicht} Given a noncommutative symmetric polynomial  $f$, are the following two conditions  equivalent?
 	\begin{enumerate}[label=(\arabic*)]
        \item For any  nontrivial $k$-Hilbert space $\mathcal{H}$ ($\mathcal{H}\neq0$) and every tuple $\underline{A} = (A_1, A_2,\ldots, A_m) \in \mathrm{B}(\mathcal{H})^{m}$, $f(\underline{A})$ is not negative semidefinite;
        \item There exist $r \in \mathbb{N} $ and  noncommutative polynomials $g_1,\dots g_r\in k\langle\underline{X}\rangle$  such that
 		\begin{equation}\label{con2}
 			\sum_{i=1}^r{g_i^* f g_i}\in 1+\SoS.
 		\end{equation}
			
 	\end{enumerate}
 \end{openproblem}
 This problem was answered negatively in \cite{MR4931501} by the following
example.
    \begin{example}\label{counterexample}
 	\begin{equation}\label{counterex}
 		f_0(X_1,X_2)=X_1 X_2^2 X_1-X_2 X_1^2 X_2+1.
 	\end{equation}
 \end{example}
 More precisely, \(f_0\) satisfies Condition~(1) in the above open problem, but
does not satisfy Condition~(2).
   This raises the natural question of how to modify the above conditions to establish their equivalence. This question will be resolved in Section~3. As noted in \cite{IgorMarkus2007nicht}, ``an affirmative answer to this problem would give a noncommutative analog of Artin’s solution to Hilbert’s 17th problem". Guided by this analogy, we introduce the notion of rational sums of squares for noncommutative polynomials.

    Let $f$ be a noncommutative symmetric polynomial,   and define the set,
   \[\Sigma_f {\coloneq}\{\sum_{i=1}^{r}g_i^*fg_i | g_i\in k \langle\underline{X}\rangle , r\in \mathbb{N}^{+}\}.\]
   Note that the polynomials $g_1,\ldots,g_r$ are not assumed to be nonzero here. Although only the case where polynomials $g_1,\ldots,g_r$ are nonzero is worth considering, we do not impose this restriction here, so that $\Sigma_f$ forms a cone and the subsequent proof is simplified.

   Next, we consider the basic properties of this cone.  For a nonzero polynomial $f$ and a finite family of nonzero polynomials $g_1,\ldots,g_r$,
   \[\sum_{i=1}^{r}g_i^*fg_i \neq 0.\]
   Indeed, since the polynomials $g_1,\ldots,g_r$ are nonzero, there exists a monomial $\omega$ of degree
\[
\deg(\omega) =\max_{1\leq i\leq r}\deg (g_i).
\]
Let $a_i$ be the coefficient of $\omega$ in $g_i$. Then at least one $a_i$ is nonzero. Similarly, choose a monomial $u$ of degree $\deg( f)$, and let $c\neq 0$ be its coefficient in $f$. By degree considerations, the coefficient of $\omega^{*}u\omega$ in
$
\sum_{i=1}^r g_i^*fg_i
$
is
\[
c\sum_{i=1}^r \cyan{|a_i|}^2\neq 0.
\]
Therefore, $\sum_{i=1}^r g_i^*fg_i$ is nonzero. It follows that $\Sigma_f$ is a pointed cone. Indeed, if there exist \cyan{$g_1,\ldots,g_{r_1}$ and $h_1,\ldots,h_{r_2}$}, such that
\[\sum_{i=1}^{r_1} g_i^*fg_i=-\sum_{i=1}^{r_2} h_i^*fh_i,\]
\cyan{then}
\[\sum_{i=1}^{r_1} g_i^*fg_i+\sum_{i=1}^{r_2} h_i^*fh_i=0.\]
By the preceding result, this implies that the $g_1,\ldots,g_r$ and $h_1,\ldots,h_r$ are all zero, and hence
\[\sum_{i=1}^{r_1} g_i^*fg_i=\sum_{i=1}^{r_2} h_i^*fh_i=0.\]

  \begin{definition}
  Let $f$ be a noncommutative symmetric polynomial,   and define the set,
   \[\Sigma_f {\coloneq}\{\sum_{i=1}^{r}g_i^*fg_i | g_i\in k \langle\underline{X}\rangle , r\in \mathbb{N}^{+}\}.\]
      For a noncommutative symmetric polynomial $f$,  we say that $f $ admits a \emph{rational sum-of-squares form}, if either $f=0$ or
      \[\Sigma_f\cap \SoS \neq \{0\}.\]Equivalently, \cyan{for nonzero symmetric polynomial $f$, $f$ admits a rational sum-of-squares form if and only if} there exist $r \in \mathbb{N}^{+} $ and  noncommutative polynomials $g_1,\dots g_r\in k\langle\underline{X}\rangle$  such that 	\[0\neq\sum_{i=1}^r{g_i^* f g_i}\in \SoS.\] 
   Moreover, the set of all \cyan{noncommutative polynomials that admit a rational sum-of-squares form} is denoted by $\RSoS$. 
      \end{definition}

\cyan{We remark that the notion of a rational sum-of-squares form considered here differs from that used in some previous works, such as  \cite{MR3414573}, where ``rational sums of Hermitian squares'' refers to sums of Hermitian squares with rational coefficients.}

    \section{Noncommutative RSoS Polynomial}

%    In the preceding section, we introduced noncommutative \(\RSoS\) polynomials. Since one may take the multiplier to be \(1\), every noncommutative \(\SoS\) polynomial is also an \(\RSoS\) polynomial. We now show that this inclusion is proper. More precisely, we construct an explicit noncommutative polynomial \(P\) such that \(P\in\RSoS\) but \(P\notin\SoS\). Hence \(\RSoS\) is a genuine enlargement of \(\SoS\).

 In the previous section, we introduced the definition of a noncommutative $\RSoS$ polynomial. Clearly, every noncommutative $\SoS$ polynomial is also a noncommutative $\RSoS$ polynomial. 
We begin by demonstrating that the inclusion is strict (i.e., $\SoS \subsetneq \RSoS$), hence the definition of $\RSoS$ is  nontrivial. Indeed, we construct an explicit noncommutative polynomial $P$ that is $\RSoS$ but not $\SoS$.

\begin{comment}
  \begin{example}\label{ex:RSOS-not-SOS}
Assume \(m\ge 2\), and write \(X=X_1\) and \(Y=X_2\). Define
\[
    P(X,Y)
    :=2Y^2X^2Y^2+Y^2X^2Y+YX^2Y^2+X^2
    \in k\langle X_1,\ldots,X_m\rangle .
\]
\end{example}

\begin{theorem}\label{thm:SOS-strict-RSOS}
For every free algebra \(k\langle X_1,\ldots,X_m\rangle\) with \(m\ge 2\),
one has
\[
    \SoS \subsetneq \RSoS.
\]
More precisely, the polynomial \(P\) in Example~\ref{ex:RSOS-not-SOS}
satisfies
\[
    P\in \RSoS\setminus \SoS .
\]
\end{theorem}  
\end{comment}

  \begin{example}\label{example}
 For convenience, let \cyan{$X=X_1,Y=X_2$}, and \cyan{define}
       $$P(X,Y)\coloneq 2Y^2X^2Y^2+Y^2X^2Y+YX^2Y^2+X^2  \in k\langle X,Y\rangle$$ 
  \end{example}
  \begin{theorem}\label{definition}
  For every free algebra \(k\langle X_1,\ldots,X_m\rangle\) with \(m\ge 2\),
   $\SoS$ is strictly contained in $\RSoS$, i.e.,
     \[\SoS\subsetneq \RSoS.\]
    \end{theorem}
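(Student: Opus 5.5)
The statement has two halves. The inclusion $\SoS\subseteq\RSoS$ is immediate by taking $r=1$ and $g_1=1$. Strictness will be witnessed by the polynomial $P$ of Example~\ref{example}: I will show $P\notin\SoS$ and $P\in\RSoS$. Since $P$ involves only $X=X_1$ and $Y=X_2$, the same argument works in $k\langle X_1,\ldots,X_m\rangle$ for every $m\ge 2$. To see that $P\notin\SoS$ there, I will specialise $X_j\mapsto 0$ for $j\ge3$: this is a $*$-homomorphism fixing $P$, so it maps an $\SoS$ representation in the larger algebra to one in $k\langle X,Y\rangle$.

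\emph{Step 1: $P\notin\SoS$ (Gram matrix argument).} Suppose $P=\sum_i g_i^*g_i$. Degree considerations force $\deg g_i\le 3$. Write $P=v^*Gv$ with $G\succeq0$, where $v$ is the vector of words of length $\le 3$. A diagonal entry $G_{w,w}$ must vanish whenever $w^*w$ has zero coefficient in $P$, because no other pair of words of length $\le 3$ with the same extreme degrees can produce $w^*w$. Positive semidefiniteness then kills the whole row and column of such a $w$. I plan to apply this in order to the following words:
\begin{itemize}
\item every $w$ of length $3$ other than $XY^2$;
\item then $X^2Y$, $YX$ and $Y$, using that the words $YX^4Y$, $XY^2X$ and $Y^2$ do not occur in $P$.
\end{itemize}
After these rows are zeroed, the only pair producing $YX^2Y$ is $(XY,XY)$. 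Since $YX^2Y$ does not occur in $P$, this gives $G_{XY,XY}=0$. On the other hand, $Y^2X^2Y$ can only come from the entry $G_{XY^2,XY}$, which must therefore equal $\tfrac12$. The $2\times2$ principal minor on $\{XY^2,XY\}$ is then $2\cdot 0-\tfrac14<0$, a contradiction. The main obstacle is the careful bookkeeping that no other Gram entries contribute to $YX^2Y$ or $Y^2X^2Y$; this is where I will be most careful, ordering the eliminations by degree.

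\emph{Step 2: $P\in\RSoS$.} I will use the multiplier $g=1+cY$ and compute
\[
g^*Pg=P+c(YP+PY)+c^2\,YPY .
\]
I will then look for a positive semidefinite Gram matrix for this product in the words $u=(XY^3,XY^2,XY,X)$. Collecting coefficients gives the symmetric matrix with entries
\[
G_{11}=2c^2,\quad G_{12}=2c+\tfrac{c^2}{2},\quad G_{13}=\tfrac c2,\quad G_{22}=2+2c,\quad G_{23}=\tfrac12,\quad G_{33}=c^2,\quad G_{34}=\tfrac c2,\quad G_{44}=1,
\]
with all remaining entries above the diagonal zero. For $c=1$ this is
\[
\begin{pmatrix}2&5/2&1/2&0\\ 5/2&4&1/2&0\\ 1/2&1/2&1&1/2\\ 0&0&1/2&1\end{pmatrix}.
\]
I will check that this matrix is positive definite, for instance via an explicit $LDL^*$ factorisation with positive pivots; the first two leading minors are $2$ and $\tfrac74$. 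Then $g^*Pg=u^*Gu$ is a nonzero element of $\SoS$, so $P\in\RSoS$.

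If the $c=1$ matrix fails to be positive semidefinite, I would treat $c$ as a parameter, or enlarge the multiplier to $1+cY+dY^2$, and tune the Gram matrix. The Gram representation has freedom wherever distinct pairs of words produce the same product, for example $YX^2Y$ via $(XY,XY)$ versus $(Y,X^2Y)$, and I would exploit that freedom in the tuning.
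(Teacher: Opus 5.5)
Your Step~1 is essentially the paper's Gram-matrix argument and works, with two small slips. The coefficient of $Y^2X^2Y$ forces $G_{XY^2,XY}=1$, not $\tfrac12$. In the free algebra $a^*b$ and $b^*a$ are different words, so an off-diagonal Gram entry carries the full coefficient of its word, with no commutative-style halving. The contradiction only needs $G_{XY^2,XY}\neq0$, so this is harmless here. Also, your eliminations of $YX$ and $Y$ are unnecessary. The one for $Y$ is not justified as stated, since $(1,Y^2)$ and $(Y^2,1)$ also contribute to $Y^2$. Once the row of $X^2Y$ vanishes, $(XY,XY)$ is already the only surviving pair for $YX^2Y$.

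Step~2, however, has a genuine gap. The same halving error corrupts your Gram matrix. The correct entries are $G_{12}=2c+c^2$, $G_{13}=c$, $G_{23}=1$ and $G_{34}=c$, which are the coefficients of $Y^3X^2Y^2$, $Y^3X^2Y$, $Y^2X^2Y$ and $YX^2$. At $c=1$ the leading $2\times2$ minor is $2\cdot4-9=-1$.

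Nor is there any freedom to tune. In $g^*Pg$ the word $Y^2X^4Y^2$ does not occur, so $G_{X^2Y^2,X^2Y^2}=0$ and the row of $X^2Y^2$ vanishes in every positive semidefinite Gram matrix. After that, the entries indexed by $\{XY^3,XY^2\}$ are forced, and the minor equals $2c^2(2+2c)-(2c+c^2)^2=-c^4<0$ for every $c\neq0$ (it is $-|c|^4$ over $\mathbb{C}$).

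Your fallbacks fail for the same reason. For any single multiplier $g=\sum_{i\le h}c_iY^i$ with $c_h\neq0$, the same argument forces the principal minor on $\{XY^{h+2},XY^{h+1}\}$ to equal $-|c_h|^4$. The culprit is the cross terms (such as $c(YP+PY)$) that a single multiplier inevitably produces.

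The missing idea is to use several multipliers, which the definition of $\RSoS$ allows ($r\ge2$). With $g_1=1$ and $g_2=Y$ one gets $P+YPY$. Its Gram matrix in the words $(XY^3,XY^2,XY,X)$ is
\[
\begin{pmatrix}2&1&0&0\\1&2&1&0\\0&1&1&0\\0&0&0&1\end{pmatrix},
\]
which is positive definite, with leading minors $2,3,1,1$. This is exactly the paper's decomposition
\[
(XY^3)^*XY^3+(XY^3+XY^2)^*(XY^3+XY^2)+(XY^2+XY)^*(XY^2+XY)+X^2.
\]
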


\begin{comment}
Before the proof, we recall the Gram-matrix representation for
noncommutative polynomials. By \cite[Lemma~2.1]{Helton2002positive}, for every
symmetric polynomial \(f\in k\langle \underline X\rangle\), there exists a
self-adjoint matrix \(M_f\) such that
\[
    f(\underline X)=V^d(\underline X)^* M_f V^d(\underline X),
\]
where \(d=\lceil \deg(f)/2\rceil\), and \(V^d(\underline X)\) is the column
vector of all monomials of degree at most \(d\). Such a matrix \(M_f\) is not
unique; we call any matrix satisfying the above identity a coefficient matrix
of \(f\). Its rows and columns are indexed by the monomials appearing in
\(V^d(\underline X)\).

Equivalently, for every monomial \(\omega\), its coefficient in \(f\) is
\[
    \sum_{\omega=\omega_1^*\omega_2} M_f(\omega_1,\omega_2),
\]
where the sum runs over all monomials
\(\omega_1,\omega_2\) of degree at most \(d\). By
\cite[Theorem~2.1]{McCullough2005}, a symmetric noncommutative polynomial
\(f\) belongs to \(\SoS\) if and only if it admits a positive semidefinite
coefficient matrix.
\end{comment}

 Before the proof, the following important concept is needed. By \cite[Lemma 2.1]{Helton2002positive},  for every noncommutative symmetric polynomial $f$ there exists a self-adjoint matrix $M_f$, such that
		\[f(\underline{X})=V^{d}(\underline{X})^* M_f V^{d}(\underline{X}).\]
	 where $d=\lceil \deg(f)/2 \rceil$, and  $V^{d}(\underline{X})$ is a vector whose entries consist of all monomials of degree at most $d$. The matrices satisfying the above conditions are not unique; any of them is called a coefficient matrix of $f$. \cyan{For notational convenience, the rows and columns of $M_f$ are indexed by the monomials in $V^{d}(\underline{X})$}.
    For a monomial $\omega$ in $f$ \cyan{and any coefficient matrix $M_f$},  there exist corresponding entries whose indices $(\omega_1,\omega_2)$ satisfies $\omega=\omega_1^*\omega_2$ in $M_f$, and  the coefficient of $\omega$ in $f$ equals  $\sum_{\omega=\omega_1^*\omega_2}M_f(\omega_1,\omega_2)$.
    By \cite[Theorem 2.1]{McCullough2005},
	 a noncommutative polynomial $f$ belongs to $\SoS$ if and only if there exists a positive semidefinite coefficient matrix $M_f$.

\begin{proof}[\cyan{Proof of Theorem~\ref{definition}}]

\begin{comment}
    The proof of Theorem~\ref{definition} is based on the key fact that
the polynomial \(P(X,Y)\) in Example~\ref{example} satisfies
\(P\in\RSoS\setminus\SoS\).
We first show that \(P\notin\SoS\). Since \(\deg(P)=6\), any coefficient
matrix for \(P\) may be indexed by the monomials of degree at most \(3\).
Let \(M_P\) be an arbitrary self-adjoint coefficient matrix of \(P\). For the
monomial \(Y^2X^2Y\), the only decompositions
\(\omega_1^*\omega_2=Y^2X^2Y\), with
\(\deg(\omega_1),\deg(\omega_2)\le 3\), are
\[
    (XY^2)^*(XY)=Y^2X^2Y,
    \qquad
    (Y^2)^*(X^2Y)=Y^2X^2Y .
\]
Similarly, the monomial \(YX^2Y^2\) arises only from
\[
    (XY)^*(XY^2)=YX^2Y^2,
    \qquad
    (X^2Y)^*(Y^2)=YX^2Y^2 .
\]
Therefore the corresponding entries of \(M_P\) satisfy
\[
\begin{aligned}
    M_P(XY^2,XY)+M_P(Y^2,X^2Y) &= 1,\\
    M_P(XY,XY^2)+M_P(X^2Y,Y^2) &= 1.
\end{aligned}
\]
Since \(M_P\) is self-adjoint, we also have
\[
\begin{aligned}
    M_P(XY^2,XY) &= \overline{M_P(XY,XY^2)},\\
    M_P(Y^2,X^2Y) &= \overline{M_P(X^2Y,Y^2)} .
\end{aligned}
\]
Equivalently,
\[
\begin{aligned}
    &M_P(XY^2,XY)+M_P(Y^2,X^2Y) = 1,\\
    &M_P(XY,XY^2)+M_P(X^2Y,Y^2) = 1,\\
    &M_P(XY^2,XY) = \overline{M_P(XY,XY^2)},\\
    &M_P(Y^2,X^2Y) = \overline{M_P(X^2Y,Y^2)} .
\end{aligned}
\]
    
\end{comment}

    This theorem will be proved by using the following fact. The noncommutative polynomial $P(X,Y)$ belongs to $\RSoS$, but does not belong to $\SoS$.
    
    It is first shown that the noncommutative polynomial $P$ is not a noncommutative $\SoS$ polynomial by proving that its coefficient matrices cannot be positive semidefinite. The degree of the noncommutative polynomial $P$ is 6. Hence, the indices of its coefficient matrix consist of all monomials of degree at most 3. The corresponding entries of monomial $Y^2X^2Y$ are indexed by $(XY^2, XY)$ and $(Y^2,X^2Y)$. The corresponding entries of monomial $YX^2Y^2$ are indexed by $(XY,XY^2)$ and $(X^2Y,Y^2)$, and they satisfy the following conditions.
     \begin{eqnarray*}
       &&M_P(XY^2, XY)+M_P(Y^2,X^2Y)=1,\\ 
       &&M_P(XY, XY^2)+M_P(X^2Y,Y^2)=1,\\
       &&M_P(XY^2, XY)=\overline{M_P(XY, XY^2)}, \\
       &&M_P(Y^2,X^2Y)=\overline{M_P(X^2Y,Y^2)}. 
     \end{eqnarray*}
  \begin{comment}
    For the coefficient matrix of noncommutative polynomial $P$, \cyan{there are only two entries of the coefficient matrix related  to the term $Y^2X^2Y$ and $YX^2Y^2$  respectively, which are} \[\{(XY^2, XY),(XY,XY^2)\} \text{ or }\{(Y^2,X^2Y),(X^2Y,Y^2)\},\]
        \cyan{That is, if the rows and columns of $M_f$ are indexed by elements in $V^{3}(\underline{X})$ (that is all monomials of degree at most $3$), then 
    \[M_f(XY^2,XY)+M_f(Y^2,X^2Y) \text{ and } M_f(X^2Y,Y^2)+M_f(XY,XY^2)\]
    should  exactly equal the coefficient of $Y^2X^2Y$ and  $YX^2Y^2$   in $P$ respectively. \jianting{rewrite  what is indexed by }}
\end{comment}
     Similarly, the corresponding entries of monomial $X^2$ are indexed by $(X,X)$, $(1,X^2)$ and $(X^2,1)$, and they satisfy the following conditions.
     \begin{eqnarray*}
     && M_P(X,X)+M_P(1,X^2)+M_P(X^2,1)=1,\\
         && M_P(1,X^2)=\overline{M_P(X^2,1)}.
     \end{eqnarray*}
 The corresponding entries of monomial $Y^2X^2Y^2$ are indexed by $(XY^2,XY^2)$, and $M_P(XY^2,XY^2)=2$.

Similarly, we have the following conditions for $M_P$:
 \begin{eqnarray}
    && M_P(X^2Y,X^2Y)=0, \nonumber\\
    && M_P(XY,XY)+ M_P(X^2Y,Y)+M_P(Y,X^2Y)=0. \label{eq:linear4Lemma2.1}
 \end{eqnarray}
Now consider the following principal submatrix of the coefficient matrix $M_P$.
    
	\[
    \scalebox{0.75}{\bordermatrix{   &XY&YX&Y^2&X^3&X^2Y&XYX&YX^2&XY^2\cr
		XY  & & & & & & & & M_P(XY,XY^2) \cr
		YX  & & & & & & & &  \cr
		Y^2  & & & & &M_P(Y^2,X^2Y) & & &  \cr
		X^3  & & & & & & & & \cr
		X^2Y & & &M_P(X^2Y,Y^2) & & 0& & &  \cr
		XYX  & & & & & & & & \cr
		YX^2 & & & & & & & &  \cr
		XY^2 &M_P(XY^2,XY)& & & & & & &2  \cr
	 }} \]
    If the coefficient matrix $M_P$ is positive semidefinite, then this submatrix of $M_P$ is also positive semidefinite. Hence  the condition $M_P(X^2Y,X^2Y)=0$ implies   $M_P(X^2Y,Y)=0$ and  $M_P(Y,X^2Y)=0$. Therefore, by \eqref{eq:linear4Lemma2.1}, $M_P(XY,XY)=0$ and   
    the entries in the submatrix satisfy the following condition:
    \[M_P(XY^2, XY)=M_P(Y^2,X^2Y)=M_P(XY, XY^2)=M_P(X^2Y,Y^2)=0,\] 
 which contradicts the constraint that their sum is $2$.
    So the coefficient matrix cannot be positive semidefinite. Therefore, the noncommutative polynomial $P$ is not a noncommutative $\SoS$ polynomial.

\begin{comment}
  We next prove that \(P\in\RSoS\). Take
\[
    g_1=1,\qquad g_2=Y .
\]
Since \(Y^*=Y\), we have
\[
    \sum_{i=1}^2 g_i^*Pg_i=P+YPY .
\]
A direct computation gives the following sum-of-squares decomposition:
\[
\begin{aligned}
P+YPY
={}&(XY^3)^*(XY^3)
 +(XY^3+XY^2)^*(XY^3+XY^2)  \\
&+(XY^2+XY)^*(XY^2+XY)+X^*X .
\end{aligned}
\]
Thus \(\sum_{i=1}^2 g_i^*Pg_i\in\SoS\), and hence \(P\in\RSoS\).
Since we have already shown that \(P\notin\SoS\), it follows that
\(P\in\RSoS\setminus\SoS\). Therefore,
\[
    \SoS\subsetneq\RSoS .
\]
  %\end{proof}
The next result shows that this strict enlargement disappears in the
homogeneous case.
\end{comment} 

   \cyan{Now we prove that $P \in \RSoS$}. Let 
   \[g_1=1\text{ , }g_2=Y\]
     \begin{eqnarray*}
     \sum_{i=1}^{2}g_i^*Pg_i&=& P+YPY\\
     &=&2Y^3X^2Y^3+Y^3X^2Y^2+Y^2X^2Y^3+YX^2Y\\
     &&+2Y^2X^2Y^2+Y^2X^2Y+YX^2Y^2+X^2\\
    &=&(XY^3)^{*}XY^3+(XY^3+XY^2)^{*}(XY^3+XY^2)\\
     &&+(XY^2+XY)^{*}(XY^2+XY)+X^2
 \end{eqnarray*}
     Hence, the noncommutative polynomial $P$ is a  noncommutative $\RSoS$ polynomial. The set of noncommutative $\SoS$ polynomials is a proper subset of
      the set of noncommutative $\RSoS$ polynomials.
     \end{proof}
     In the homogeneous case, the set of noncommutative $\RSoS$ polynomials and the set of noncommutative $\SoS$ polynomials are equal. The following \cyan{theorem} gives the relevant results.

\begin{comment}

\begin{theorem}\label{thm:homogeneous:RSOS=SOS}
Let \(f\in k\langle \underline X\rangle\) be a homogeneous symmetric
noncommutative polynomial. Then
\[
    f\in\SoS \quad \Longleftrightarrow \quad f\in\RSoS .
\]
\end{theorem}

Before proving the theorem, we record a necessary condition for membership in
\(\RSoS\). For a nonzero noncommutative polynomial \(f\), let
\(\operatorname{ld}(f)\) denote its leading homogeneous part, namely the sum of
all terms of \(f\) of degree \(\deg(f)\).

\begin{lemma}[Necessary condition]\label{lemma leading polynomial}
Let \(f\in k\langle \underline X\rangle\) be a symmetric noncommutative
polynomial. If \(f\in\RSoS\), then
\[
    \operatorname{ld}(f)\in\SoS .
\]
\end{lemma}
    
\end{comment}

 \begin{theorem}\label{thm:homogeneous:RSOS=SOS}
           If $f \in k\langle X_1,\ldots,X_m\rangle $ is a noncommutative homogeneous polynomial, then $f$ belongs to $\SoS$ if and only if $f$ belongs to $\RSoS$.
     \end{theorem}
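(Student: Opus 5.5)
We prove the two inclusions separately. The inclusion $\SoS\subseteq\RSoS$ is immediate: take $r=1$ and $g_1=1$, or note that $f=0$ lies in both sets. So the content is the converse. Let $f\neq 0$ be homogeneous and symmetric with $f\in\RSoS$. Then there are $g_1,\dots,g_r$ such that
$$s:=\sum_{i=1}^r g_i^* f g_i\neq 0,\qquad s\in\SoS.$$
The plan has three steps:
\begin{enumerate}
\item pass to top-degree homogeneous parts;
\item peel off the outer words of length $D$ by a coefficient-extraction map;
\item check that this map sends homogeneous sums of squares to sums of squares.
\end{enumerate}

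\emph{Step 1: top-degree parts.} Let $D=\max_i\deg(g_i)$, and let $h_i$ be the degree-$D$ homogeneous part of $g_i$; some $h_i$ is nonzero. Since $f$ is homogeneous, the part of $s$ of degree $\deg f+2D$ is exactly
$$t:=\sum_i h_i^* f h_i.$$
By the coefficient argument given before the Definition of $\RSoS$, $t\neq 0$. Next we show that the top homogeneous part of a sum of squares is again a sum of squares. Write $s=\sum_j p_j^*p_j$ and let $e=\max_j\deg(p_j)$. The degree-$2e$ part of $s$ is $\sum_{\deg p_j=e}\operatorname{ld}(p_j)^*\operatorname{ld}(p_j)$. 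This is nonzero: choose a word $\nu$ of length $e$ occurring in some $\operatorname{ld}(p_j)$; the coefficient of $\nu^*\nu$ is a sum of $|\text{coefficients}|^2$, hence positive. Therefore $\deg s=2e$. Comparing degrees gives $\deg f+2D=2e$, so $\deg f=2c$ is even. It also gives $t=\sum_{\deg p_j=e}\operatorname{ld}(p_j)^*\operatorname{ld}(p_j)$, which is a homogeneous sum of squares of forms $q_j$ of degree $c+D$.

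\emph{Step 2: the extraction map.} Fix a word $\omega$ of length $D$ such that some $h_i$ has nonzero coefficient $a_i$ at $\omega$. Define a linear map $L_\omega$ on homogeneous polynomials of degree $2c+2D$ by
$$L_\omega\Big(\sum_w \beta_w w\Big)=\sum_{u}\beta_{\omega^* u\,\omega}\,u,$$
where $u$ ranges over words of length $2c$. In words, $L_\omega$ keeps only the words that begin with $\omega^*$ and end with $\omega$, and strips those outer letters. Write $h_i=\sum_{|\nu|=D}b_{i,\nu}\nu$. Then in $h_i^*fh_i=\sum_{\nu,\mu}\overline{b_{i,\nu}}b_{i,\mu}\,\nu^* f\mu$, a word starting with $\omega^*$ and ending with $\omega$ can only come from $\nu=\mu=\omega$, by length. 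Hence
$$L_\omega(t)=\Big(\sum_i|a_i|^2\Big) f,$$
and the scalar in front is strictly positive.

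\emph{Step 3: positivity of $L_\omega$.} This is the main point to check: $L_\omega$ maps homogeneous sums of squares to sums of squares. Decompose each $q_j$ uniquely by its length-$D$ suffix: $q_j=\sum_{|\nu|=D}q_{j,\nu}\,\nu$, with each $q_{j,\nu}$ homogeneous of degree $c$. Then
$$q_j^*q_j=\sum_{\nu,\mu}\nu^*\,q_{j,\nu}^*q_{j,\mu}\,\mu .$$
Again by length, the words beginning with $\omega^*$ and ending with $\omega$ arise only from the term $\nu=\mu=\omega$. Hence $L_\omega(q_j^*q_j)=q_{j,\omega}^*q_{j,\omega}$. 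Combining with Step 2,
$$f=\Big(\sum_i|a_i|^2\Big)^{-1}\sum_j q_{j,\omega}^*q_{j,\omega}\in\SoS,$$
which completes the converse.
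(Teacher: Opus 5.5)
Your proof is correct and takes essentially the same route as the paper. The paper deduces the theorem from its leading-polynomial lemma, where one fixes a top-degree word $\omega$ of the multipliers and restricts a positive semidefinite Gram matrix of $\sum_i g_i^*fg_i$ to the principal block indexed by degree-$\deg(F)/2$ monomials ending in $\omega$; that is exactly the Gram-matrix form of your extraction map $L_\omega$, and your polynomial-level version just makes explicit the parity and top-degree facts the paper uses implicitly.
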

         Before proving the theorem, we need to introduce the following lemma regarding the necessary condition for membership in  $\RSoS$.         
         For a noncommutative polynomial $f$, its \emph{leading polynomial} $\operatorname{ld}(f)$ is defined as the sum of all terms of $f$ with degree $\deg(f)$. This yields the following necessary condition. 
     \begin{lemma}[Necessary condition]\label{lemma leading polynomial}
         Let  $f \in k\langle X_1,\ldots,X_m\rangle $  be a noncommutative  polynomial. If $f$ belongs to $\RSoS$, then its leading polynomial $\operatorname{ld}(f)$  belongs to $\SoS$.
     \end{lemma}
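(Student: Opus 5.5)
The plan is to pass to top-degree parts in an $\RSoS$ certificate and then exploit the fact that a homogeneous polynomial has a \emph{unique} coefficient matrix.

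\textbf{Step 1: reduce to top-degree parts.} We may assume $f\neq 0$, since $\operatorname{ld}(0)=0\in\SoS$. Choose $g_1,\dots,g_r$ and $h_1,\dots,h_t$ with
\[
0\neq\sum_{i=1}^r g_i^*fg_i=\sum_{j=1}^t h_j^*h_j .
\]
Discard the zero $g_i$ and $h_j$; none of the $g_i$ vanish identically, since the left side is nonzero. Let $d=\max_i\deg g_i$ and $e=\max_j\deg h_j$. Let $G_i$ (resp.\ $H_j$) be the homogeneous component of $g_i$ of degree $d$ (resp.\ of $h_j$ of degree $e$).

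By the coefficient argument in the introduction, the homogeneous part of degree $2d+\deg f$ of the left side is $\sum_i G_i^*\operatorname{ld}(f)G_i$. This part is nonzero: if $\omega$ is a word of length $d$ with some coefficient $a_i(\omega)\neq 0$, and $u$ is a top-degree word of $f$ with coefficient $c$, then $\omega^*u\omega$ has coefficient $c\sum_i|a_i(\omega)|^2\neq 0$.

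Likewise the degree-$2e$ part of the right side is $\sum_j H_j^*H_j$. It is nonzero because, for any length-$e$ word $\omega$ occurring in some $H_j$, the coefficient of $\omega^*\omega$ is $\sum_j|b_j(\omega)|^2>0$.

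Comparing top degrees gives $2d+\deg f=2e$, so $\deg f=2s$ is even with $s=e-d$. It also gives
\[
\sum_i G_i^*FG_i=\sum_j H_j^*H_j,\qquad F:=\operatorname{ld}(f).
\]
So it suffices to treat homogeneous $F$ and homogeneous multipliers $G_i$.

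\textbf{Step 2: Gram matrices in the homogeneous setting.} Every word of length $2n$ factors uniquely as $\omega_1^*\omega_2$ with $|\omega_1|=|\omega_2|=n$. Hence a homogeneous symmetric polynomial of degree $2n$ has exactly one coefficient matrix supported on degree-$n$ words, with entries equal to its coefficients.

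Let $M_F$ be this matrix for $F$, indexed by words of length $s$. Then
\[
F=\sum_{a,b}M_F(a,b)\,a^*b,\qquad G_i=\sum_{|w|=d}c_i(w)\,w .
\]
Expanding, each term of $G_i^*FG_i$ has the form $\overline{c_i(w_1)}\,M_F(a,b)\,c_i(w_2)\,(aw_1)^*(bw_2)$. Therefore the unique degree-$e$ Gram matrix $N$ of $\sum_iG_i^*FG_i$ is
\[
N(aw_1,bw_2)=M_F(a,b)\,C(w_1,w_2),\qquad C(w_1,w_2):=\sum_i\overline{c_i(w_1)}\,c_i(w_2),
\]
a Kronecker-type product of $M_F$ with $C$.

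The matrix $C$ is a nonzero positive semidefinite Gram matrix, since the $G_i$ are not all zero. Meanwhile $\sum_jH_j^*H_j$ has the positive semidefinite coefficient matrix $\sum_j\eta_j\eta_j^*$, where $\eta_j$ is the coefficient vector of $H_j$. By uniqueness this matrix equals $N$, so $N\succeq0$.

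\textbf{Step 3: extract $M_F\succeq 0$.} Pick $w$ with $C(w,w)>0$, which exists because $C\neq0$ and $C\succeq0$. Restrict $N$ to the principal submatrix with indices $\{aw: |a|=s\}$. This submatrix equals $C(w,w)\,M_F$, and it is positive semidefinite as a principal submatrix of $N$. Hence $M_F\succeq0$, and by \cite[Theorem 2.1]{McCullough2005}, $F=\operatorname{ld}(f)\in\SoS$.

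The main obstacle is Step 2: one must get the indexing of the product structure exactly right, and one must invoke uniqueness of the homogeneous Gram matrix. Uniqueness is what lets us identify $N$ with the positive semidefinite matrix coming from the $H_j$. With a nonunique Gram matrix this identification would fail.
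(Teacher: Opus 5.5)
Your proposal is correct and is essentially the paper's argument. Both proofs take the principal block of a positive semidefinite Gram matrix indexed by top-degree words ending in a top-degree multiplier monomial $\omega$, and identify it with $\bigl(\sum_i |c_{i,\omega}|^2\bigr)$ times the unique Gram matrix of $\operatorname{ld}(f)$. You make explicit the top-degree bookkeeping that the paper leaves implicit ($2d+\deg f=2e$, nonvanishing of both top parts, uniqueness of homogeneous Gram matrices), and you read off $M_{\operatorname{ld}(f)}\succeq 0$ directly instead of passing through $\omega^*\operatorname{ld}(f)\omega\in\SoS$ and then peeling off $\omega$.
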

      \begin{proof}
            If $f$ is a noncommutative $\RSoS$ polynomial, there exist finitely many  noncommutative polynomials $g_1,g_2,\ldots,g_{r}$ such that 
            \[F=\sum_{i=1}^{r}g_i^{*}fg_i\in \SoS\]
 Hence, there exists a coefficient matrix $M_{F}$ of the noncommutative polynomial $F$ which is positive semidefinite.
        Let $D_{max}=\max \{\deg(g_1),\deg(g_2),\ldots\deg(g_r)\}$; then there exists a monomial $\omega$ in $g_i$ for some $1\leq i \leq r$, such that $\deg(\omega)=D_{max}$.  Let \(c_{i,\omega}\) be the coefficient of
\(\omega\) in \(g_i\), and 
\[
    C_\omega:=\sum_{i=1}^r |c_{i,\omega}|^2>0 .
\]
        
        Let $M^{sub}_{F}$ be the principal submatrix of $M_{F}$ consisting of entries indexed by monomials ending with $\omega$ and having degree $\deg(F)/2$. Then $M^{sub}_{F}$ is positive semidefinite. The terms in $F$ corresponding to the submatrix $M^{sub}_{F}$ can only have the form
        \[\alpha_{\nu}\omega^{*}\nu \omega \]
      where $\alpha_{\nu}\nu$, with $\alpha_\nu\in k$ and $\nu \in  \langle \underline{X} \rangle$, is a term of $f$ with $\deg(\nu)=\deg(f).$
        The following noncommutative polynomial can be obtained by summing all terms of the above form.
        \[f_{\omega}=\omega^{*}\operatorname{ld}(f)\omega.\]
        \dede{The above form} is a noncommutative homogeneous polynomial, and it has an even degree; its coefficient matrix is denoted by $M_{f_{\omega}}$. Let $M^{sub}_{f_{\omega}}$ be the principal submatrix of $M_{f_{\omega}}$ consisting of entries indexed by monomials ending with $\omega$ and having degree $\deg(f_{\omega})/2=\deg(F)/2$. \dede{The matrix $M^{sub}_{f_{\omega}}$} is unique due to the degree constraints on monomials used as indices. Then
        \[C_\omega M^{sub}_{f_{\omega}}=M^{sub}_{F}\succeq 0,\]
        and the entries in $M_{f_{\omega}}$ but not in $M^{sub}_{f_{\omega}}$ are all zero. So the coefficient matrix $M_{f_{\omega}}$ is positive semidefinite; thus, the noncommutative polynomial $f_{\omega}$ belongs to $\SoS$. There exist finitely many noncommutative polynomials $h_1,h_2,\ldots,h_l$ such that
        \[f_{\omega}=\sum^{l}_{j=1}h_j^*h_j.\]
        Since all entries of $M_{f_{\omega}}$ outside
$M^{sub}_{f_{\omega}}$ are zero,  all noncommutative polynomials $h_1,h_2,\ldots,h_l$ are homogeneous, and there exist noncommutative polynomials $h^{'}_1,h^{'}_2,\ldots,h^{'}_l$ such that
        \[h_j= h^{'}_j\omega,~\text{for all}~1\leq j \leq l,\]
        \[\operatorname{ld}(f)=\sum_{j=1}^{l}h_j^{'*}h_j^{'}.\]
        \end{proof}

\begin{remark}
    For Example~\ref{counterexample} $f_0(X_1,X_2)=X_1 X_2^2 X_1-X_2 X_1^2 X_2+1$, its leading polynomial is $X_1 X_2^2 X_1-X_2 X_1^2 X_2$. Since there does not exist a positive semidefinite coefficient matrix for its leading polynomial, it follows that $X_1 X_2^2 X_1-X_2 X_1^2 X_2$ does not belong to $\SoS$. By Lemma~\ref{lemma leading polynomial}, $f_0$ does not belong to $\RSoS$, thus $f_0$ does not satisfy Condition (2) in the open problem. 
 \end{remark}
Now we are ready to prove Theorem~\ref{thm:homogeneous:RSOS=SOS}.
   \begin{proof}[Proof of Theorem~\ref{thm:homogeneous:RSOS=SOS}]
         $f\in \SoS \Rightarrow f\in\RSoS$ is clear.
          Conversely, suppose $f$ is homogeneous. Then the leading polynomial $\operatorname{ld}(f)=f$. Hence, by Lemma~\ref{lemma leading polynomial}, $f$ belongs to $\SoS$.
     \end{proof}
      
    % \section{The non-Archimedean  Nirgendsnegativsemi-definitheitsstellensatz}

    \section{ A Compression Characterization of Noncommutative Rational Sums of Squares}
 % Consider the following condition: for a noncommutative symmetric polynomial $f$, there exist $r \in \mathbb{N} $ and noncommutative polynomials $g_1,\dots g_r\in k\langle\underline{X}\rangle$  such that
	% 		\begin{equation*}\label{con2}
	% 			\sum_{i=1}^r{g_i^* f g_i}\in{\rm SoS}.
	% 		\end{equation*}
 %  The finite collection of noncommutative polynomials necessarily admits a uniform upper bound on their degrees. So the above condition is equivalent to the condition that there exist $r,d\in \mathbb{N} $ and noncommutative polynomials $g_1,\dots g_r\in k\langle\underline{X}\rangle$  such that
 %  \[\operatorname{max}\{\deg(g_1),\deg(g_2),\ldots,\deg(g_r)\}\leq d,\]
	% 		\begin{equation*}\label{con2}
	% 			\sum_{i=1}^r{g_i^* f g_i}\in{\rm SoS}.
	% 		\end{equation*}
 %  Therefore, there exists a natural classification of noncommutative $\RSoS$ polynomials parameterized by \cyan{ the degree bound of $g_i$}.
 %  Hence the following theorem is proposed by \cyan{introducing} the upper bound of degrees of finite noncommutative polynomials $g_1,g_2,\ldots,g_r$.

Consider the following condition for a symmetric noncommutative polynomial $f\in  k\langle X_1,\ldots,X_m\rangle=k\langle\underline{X}\rangle$:
there exist $r\in\mathbb{N}$ and $g_1,\ldots,g_r\in  k\langle\underline{X}\rangle$
such that
\begin{equation}\label{con2}
    0\neq\sum_{i=1}^r g_i^* f g_i \in {\rm SoS}.
\end{equation}
Since the noncommutative polynomials  $g_1,\ldots,g_r$ are finite in number, their degrees are
bounded. Thus \eqref{con2} naturally leads to a degree filtration of
noncommutative $\RSoS$ polynomials, parameterized by an upper bound $d$ on
$\deg(g_i)$. The following theorem is formulated in terms of this degree bound.

\begin{theorem}\label{main}
 
 Let $f=f^*\in k\langle \underline{X}\rangle$ be a noncommutative symmetric polynomial. Assume that there exists a scalar self-adjoint tuple 
  $\underline{a}\in \mathrm{M}_1(k)^m$ such that $f(\underline{a})>0$. 
% $\underline{a}\in k^m$ \cyan{(regarded as $\mathrm{M_1}(k)^m$)}, such that $f(\underline{a})>0$, 
 Then the following conditions are equivalent.
\begin{enumerate}[label=(\arabic*)]

    \item 
There exists a positive integer $d$ such that, for every nontrivial separable 
$k$-Hilbert space $\mathcal{H}$ with a fixed unit vector $e_1$ and every self-adjoint tuple
$\underline{A}\in B(\mathcal{H})^m$, if $g(\underline A)e_1\neq0$ for every nonzero $g\in k\langle\underline X\rangle$ with $\deg(g)\le d$, then the compression
 of $ f(\underline{A})$ on $W_{\underline{A}}$ is not negative definite, where 
\[
    W_{\underline{A}}
    := \spans\{\,\omega(\underline{A})e_1 \mid \deg(\omega)\le d\,\}.
\]

  % \item $f$ is a noncommutative $\RSoS$ polynomial,  more precisely, there exist $r \in \mathbb{N} $ and  noncommutative polynomials $g_1,\dots g_r\in k\langle\underline{X}\rangle$  such that 
		% 	\begin{equation*}
		% 		\sum_{i=1}^r{g_i^* f g_i}\in{\rm SoS}.
		% 	\end{equation*}
  %       \end{enumerate}

    \item There exist
$r\in\mathbb{N}^{+}$ and $g_1,\ldots,g_r\in k\langle\underline{X}\rangle$
such that
\[
    0\neq\sum_{i=1}^r g_i^* f g_i \in {\rm SoS}.
\]

\end{enumerate}

\end{theorem}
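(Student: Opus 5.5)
The direction (2)$\Rightarrow$(1) is a direct evaluation argument. For (1)$\Rightarrow$(2) I argue by contraposition: assuming (2) fails, I separate two finite-dimensional cones by a linear functional and turn that functional into an operator tuple by a truncated GNS construction.

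\textbf{(2)$\Rightarrow$(1).} Take $d:=\max_i\deg(g_i)$ and set $F:=\sum_i g_i^*fg_i=\sum_j h_j^*h_j\neq0$. Let $\underline A$ and $e_1$ satisfy the nondegeneracy hypothesis, and suppose the compression of $f(\underline A)$ to $W_{\underline A}$ were negative definite. Each vector $v_i:=g_i(\underline A)e_1$ lies in $W_{\underline A}$. Since $F\neq0$, some $g_i$ is nonzero, and for that index $v_i\neq0$ by nondegeneracy. Hence
\[
\langle F(\underline A)e_1,e_1\rangle=\sum_i\langle f(\underline A)v_i,v_i\rangle<0 .
\]
On the other hand, $\langle F(\underline A)e_1,e_1\rangle=\sum_j\|h_j(\underline A)e_1\|^2\ge0$, a contradiction.

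\textbf{(1)$\Rightarrow$(2), setting up the cones.} Suppose (2) fails, and fix any $d$. Let $\Sigma_f^d$ be the cone of all $\sum g_i^*fg_i$ with $\deg g_i\le d$. Set $N:=d+\lceil\deg f/2\rceil$, and let $\SoS_N$ be the sums of squares of polynomials of degree $\le N$. Both cones live in the finite-dimensional real space of symmetric polynomials of degree $\le 2N$. Failure of (2) gives $\Sigma_f^d\cap\SoS_N=\{0\}$.

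$\Sigma_f^d$ is the image of the PSD cone of Gram matrices $G$ (indexed by monomials of degree $\le d$) under the linear map $G\mapsto V_d^*fV_d$, with $G$ in the middle. The pointedness computation in Section~1 shows that this map vanishes on a PSD matrix only if the matrix is $0$. Therefore the image of the compact base $\{G\succeq0:\operatorname{tr}G=1\}$ is a compact set avoiding $0$, and $\Sigma_f^d$ is closed. The same argument, with $f=1$, shows that $\SoS_N$ is closed.

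\textbf{(1)$\Rightarrow$(2), separating.} Separating the compact base of $\Sigma_f^d$ from the closed convex cone $\SoS_N$ gives a linear functional $L$ with $L\ge0$ on $\SoS_N$ and $L<0$ on $\Sigma_f^d\setminus\{0\}$. I then perturb to $L+\varepsilon L_0$, where $L_0$ is strictly positive on $\SoS_N\setminus\{0\}$; for instance $L_0$ can be a sum of moment functionals of generic matrix tuples. For small $\varepsilon$, strict negativity on the compact base survives, so we may assume the Hankel form $(g,h)\mapsto L(g^*h)$ on $k\langle\underline X\rangle^N$ is positive definite.

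I expect the hypothesis that $f(\underline a)>0$ for some scalar $\underline a$ to enter in this step, ensuring the separation and perturbation are compatible and nondegenerate. Plausibly it excludes degenerate situations such as $-f$ having sign-compatible pieces, or lets one replace $L_0$ by something built from point evaluations near $\underline a$. Pinning down exactly where it is needed is the part of the plan I am least sure of.

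\textbf{(1)$\Rightarrow$(2), GNS construction.} Let $\mathcal H=k\langle\underline X\rangle^N$ with inner product $L(g^*h)$, and put $e_1=1$, rescaled to a unit vector. Define $A_i:=\pi X_i$, where $\pi$ is the orthogonal projection onto $\mathcal H$. Then $A_i$ is self-adjoint, since $L((X_ig)^*h)=L(g^*X_ih)$ whenever both products have degree $\le 2N$. Because the multiplications never leave degree $N$ along the way, $\omega(\underline A)e_1=\omega$ for every $\deg\omega\le N$. Consequently:
\begin{itemize}[label=--]
\item $g(\underline A)e_1=g\neq0$ for every nonzero $g$ of degree $\le d$, so the nondegeneracy condition holds;
\item $W_{\underline A}=k\langle\underline X\rangle^d$;
\item since $\deg g+\deg f\le N$ for $\deg g\le d$, we get $f(\underline A)g=fg$ and hence $\langle f(\underline A)g,g\rangle=L(g^*fg)<0$ for every nonzero $g\in W_{\underline A}$.
\end{itemize}
So the compression of $f(\underline A)$ to $W_{\underline A}$ is negative definite. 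Since $d$ was arbitrary, (1) fails, completing the contrapositive.

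The main obstacles are the closedness and strict-separation step and the precise role of the scalar positivity hypothesis there. Checking the degree bookkeeping in the truncated GNS construction is routine.
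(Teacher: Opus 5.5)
\textbf{Gap in the GNS step.} Your overall plan is sound, but the last bullet of the GNS step is false as written. With $N=d+\lceil\deg f/2\rceil$, the inequality $\deg g+\deg f\le N$ fails whenever $\deg f\ge 2$. In that case $fg\notin\mathcal H=k\langle\underline X\rangle^N$, while $f(\underline A)g\in\mathcal H$, so $f(\underline A)g=fg$ cannot hold.

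The identity you actually need, $\langle f(\underline A)g,g\rangle=L(g^*fg)$, is still true, and you can recover it in one of two ways:
\begin{itemize}
\item Split each monomial of $f$ as $\beta=\beta_1\beta_2$ with $\deg\beta_1,\deg\beta_2\le\lceil\deg f/2\rceil$, and use self-adjointness: $\langle\beta(\underline A)g,g\rangle=\langle\beta_2 g,\beta_1^*g\rangle=L(g^*\beta g)$.
\item Take $N=d+\deg f$. This costs nothing, since failure of (2) gives $\Sigma_f^d\cap\SoS_N=\{0\}$ for every $N$. It is exactly what the paper does: its GNS space has degree $d+F/2=d+\deg f$.
\end{itemize}

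There is a second gap in the same step. $\pi X_i g$ is undefined when $\deg g=N$. Projecting $X_ig$ onto $\mathcal H$ requires the values $L(h^*X_ig)$ in degree $2N+1$, which lies outside the domain of $L$. Your self-adjointness check explicitly covers only products of degree $\le 2N$. You can fix this in either of two ways:
\begin{itemize}
\item Extend $L$ to degree $2N+1$ by any Hermitian extension. The resulting top block of $A_i$ never affects $\omega(\underline A)e_1$ for $\deg\omega\le N$.
\item Follow the paper and separate against $\SoS_{N+1}$ (its $\SoS^{d+1+F/2}$). Then the form is positive definite on $k\langle\underline X\rangle^{N+1}$, and the $A_i$ are compressions of genuine multiplication operators (Lemma~\ref{lemmaB}).
\end{itemize}
You should also state how $L$ is extended from symmetric to arbitrary polynomials, so that the Hankel form is sesquilinear.

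\textbf{Comparison with the paper.} Once repaired, your argument differs from the paper's mainly in the closedness step, and there it is more elementary.
\begin{itemize}
\item The paper proves $\Sigma_f^d$ closed in Lemma~\ref{LemmaE}. It builds a norm from evaluations near $\underline a$, where $f\succeq\delta$, and uses it to bound the multipliers. This is the only place in its proof where the hypothesis $f(\underline a)>0$ is used.
\item You parametrize $\Sigma_f^d$ by PSD Gram matrices and use the leading-term computation. Take $\omega$ of maximal degree with $G_{\omega\omega}>0$, and $u$ a top-degree monomial of $f$ with coefficient $c$. Then the coefficient of $\omega^*u\omega$ is $c\,G_{\omega\omega}\neq0$. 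This gives closedness for every nonzero $f$.
\end{itemize}

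This answers your uncertainty about the hypothesis: in your route it is used only to guarantee $f\neq0$. You invoke $f\neq0$ silently, and it is genuinely needed, since for $f=0$ condition (1) holds while (2) fails.

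Your separation via a compact base, followed by perturbation with a strictly positive functional (e.g.\ the one from Lemma~\ref{lemmaA}), is an equally valid substitute for Klee's theorem (Lemma~\ref{LemmaS}).
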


\begin{remark}
Condition~(2) in Theorem~\ref{main} differs from the corresponding condition
in the open problem.  This modification is motivated by Artin's solution to
Hilbert's seventeenth problem.  In the commutative case, Artin proved that for
a real polynomial \(f\in \mathbb{R}[X_1,\ldots,X_n]\), the condition \(f(x)\geq 0\) for all
\(x\in\mathbb{R}^n\) is equivalent to \(f\) being a sum-of-squares of rational functions. Equivalently, there exists a nonzero polynomial
\(q\in\mathbb{R}[X_1,\ldots,X_n]\) such that \(q^2 f\) is a sum-of-squares of polynomials. Thus, compared with the form
\[
    \sum_{i=1}^r g_i^* f g_i \in \mathrm{SoS}+1
\]
in Condition (2) of the open problem, the form
\[
   0\neq \sum_{i=1}^r g_i^* f g_i \in \mathrm{SoS}
\]
may be viewed as a more natural noncommutative analog of Artin's solution.

By contrast, verifying membership in \(1+\mathrm{SoS}\) generally requires
additional structure or assumptions, and our present method does not seem to
provide an equivalent characterization of the original condition in the open
problem.  %For this reason, we formulate Theorem~\ref{main} in the above multiplier form.
\end{remark}

\begin{remark}
    Condition~(1) in Theorem~\ref{main} states that it is enough to consider self-adjoint tuple
$\underline{A}\in B(\mathcal{H})^m$ satisfying that $g(\underline A)e_1\neq0$ for every nonzero $g\in k\langle\underline X\rangle$ with $\deg(g)\le d$. For a fixed $d$, let $N(d)=1+m+\cdots+m^d$. For every Hilbert space $\mathcal H$ with $\dim\mathcal H>N(d)$, we can choose a family of mutually orthogonal vectors $\{e_\omega,|\deg(\omega)\le d\}$
in $\mathcal H$, indexed by all monomials of degree at most $d$. In particular, we may choose
 $e_{\emptyset}=e_1$, where $\emptyset$ denotes the empty monomial. For each
$i=1,\ldots,m$, define $L_i$ on the span of these vectors by
\[
L_i e_\omega=
\begin{cases}
e_{x_i\omega}, & \text{if } |\omega|<d,\\
0, & \text{if } |\omega|=d.
\end{cases}
\]
Extend $L_i$ to be zero on the orthogonal complement of this
subspace, and define
\[
B_i=L_i+L_i^*.
\]
Then each $B_i$ is a bounded self-adjoint operator on $\mathcal H$.
Moreover, the tuple $\underline{B}=(B_1,\ldots,B_m)$ 
satisfies the required condition. Indeed, for every monomial
\(\omega\) with \(|\omega|\leq d\),
\[
\omega(\underline{B})e_{\emptyset}
=
e_\omega+\text{terms of lower degree}.
\]
Therefore, the vectors
\[
\bigl\{\omega(\underline{B})e_{\emptyset}:|\omega|\leq d\bigr\}
\]
are linearly independent. For every self-adjoint tuple $\underline{A}=(A_1,\ldots,A_m)\in B(\mathcal H)^m$ 
and every $\varepsilon\in(0,1)$, define
\[
\underline{A}(\varepsilon)
=
(1-\varepsilon)\underline{A}
+
\varepsilon\underline{B}.
\]
Consider the Gram determinant
\[
p(\varepsilon)
=
\det
\left[
\left\langle
\omega(\underline{A}(\varepsilon))e_{\emptyset},
\nu(\underline{A}(\varepsilon))e_{\emptyset}
\right\rangle
\right]_{|\omega|,|\nu|\leq d}.
\]
Then $p(\varepsilon)$ is a polynomial in $\varepsilon$. Moreover, $p(1)\neq 0$, since \(\underline{A}(1)=\underline{B}\) satisfies the required condition. Hence $p$ is not the zero polynomial. Therefore, there exist arbitrarily small $\varepsilon>0$ such that $p(\varepsilon)\neq 0$.
Then for such $\varepsilon$, the vectors
\[
\left\{
\omega(\underline{A}(\varepsilon))e_{\emptyset}
:
|\omega|\leq d
\right\}
\]
are linearly independent. Therefore, the self-adjoint tuples satisfying the above condition are dense in $B(\mathcal H)_{\mathrm{sa}}^m$.

\end{remark}

  The following two sets are needed for the proof of Theorem~\ref{main}.
        \[\SoS^{d}\cyan{\coloneq}\{\sum_{i=1}^{r}g_i^*g_i| \deg(g_i)\leq d  {\text{ for every } 1\leq i\leq r}, r\in \mathbb{N}^{+}\}\]
        \[\Sigma_f^{d}\cyan{\coloneq}\{\sum_{i=1}^{r}g_i^*fg_i| \deg(g_i)\leq d {\text{ for every } 1\leq i\leq r}, r\in \mathbb{N}^{+}\}\]
        In this paper, the above definition is only applied to noncommutative symmetric polynomials $f$.
        
        For every integer $d\in\mathbb{N}$, the set $k\langle\underline{X}\rangle_d$ can be seen as the linear space spanned by all monomials whose degree is less than or equal to $d$, that is
        \[k\langle\underline{X}\rangle_d \coloneq  \operatorname{span}\{\omega|\deg(\omega)\leq d\},\]
        and the  dimension of $k\langle\underline{X}\rangle_d$ is 
        \[ N(d)=
\begin{cases}
d+1&\text{~for~} $m=1$,\\
\frac{m^{d+1}-1}{m-1}&\text{~for~} m>1.
\end{cases}
\]

The following are some results needed for the proof of Theorem~\ref{main}. In \cite{McCullough2005}, the original statements are proved for noncommutative polynomials in general noncommutative variables (not \cyan{necessarily} self-adjoint) over the complex field. As noted in the remark of \cite{McCullough2005}, the statements also hold in the real case and in the noncommutative symmetric variable case. \dede{After these adaptations,  Lemma~\ref{LemmaD} corresponds to  Proposition~3.4 in~\cite{McCullough2005}. For completeness, the proofs of these adapted results are included in the appendix.}

 \begin{lemma}\label{LemmaD}
    The set $\SoS^{d}$ is closed in the finite dimensional linear space $k\langle\underline{X}\rangle^{2d}$  \cyan{under the topology induced by the norm of noncommutative polynomial coefficients.} \dede{All norms on a finite-dimensional vector space are equivalent.}
\end{lemma}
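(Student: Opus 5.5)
The plan is to realize $\SoS^{d}$ as the image of the cone of positive semidefinite Gram matrices and then show that the image of a closed cone under a linear map is closed when the map's kernel meets that cone trivially. Let $V^d(\underline X)$ be the column vector of the $N(d)$ monomials of degree at most $d$. Every element of $\SoS^d$ has the form $V^d(\underline X)^* G\, V^d(\underline X)$ with $G\succeq 0$; conversely, every such expression lies in $\SoS^d$, by factoring $G=\sum_j v_jv_j^*$ and taking $g_j=v_j^*V^d$. So
\[
\SoS^d=\Phi(\mathcal P),\qquad \Phi(G):=V^d(\underline X)^*G\,V^d(\underline X),
\]
where $\mathcal P$ is the closed convex cone of positive semidefinite $N(d)\times N(d)$ matrices over $k$. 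The map $\Phi$ is linear from the finite-dimensional space of self-adjoint matrices into $k\langle\underline X\rangle^{2d}$.

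The key step is to show that $\ker\Phi\cap\mathcal P=\{0\}$. Let $G\succeq 0$ with $\Phi(G)=0$. For each monomial $\omega$ with $\deg(\omega)\le d$, consider the coefficient of $\omega^*\omega$ in $\Phi(G)$; it equals $\sum_{\omega_1^*\omega_2=\omega^*\omega}G(\omega_1,\omega_2)$. We argue by descending induction on $\deg(\omega)$.

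\emph{Top degree.} If $\deg(\omega)=d$, then the only factorization of $\omega^*\omega$ with both factors of degree at most $d$ is $(\omega,\omega)$. Hence $G(\omega,\omega)=0$. Since $G\succeq 0$, the whole row and column of $\omega$ vanish.

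\emph{Induction step.} Suppose the rows of all monomials of degree greater than $j$ are already zero, and let $\deg(\omega)=j$. Any other factorization $\omega^*\omega=\omega_1^*\omega_2$ with $\omega_1\neq\omega$ has $\deg(\omega_1)\ne\deg(\omega_2)$, so one of the two factors has degree greater than $j$. The corresponding entry is therefore already zero. It follows that $G(\omega,\omega)=0$, and positivity again kills that row.

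Hence $G=0$. (This is the same diagonal-entry argument the paper used in the proof of Theorem~\ref{definition}.)

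To conclude closedness, take $p_n=\Phi(G_n)\to p$ with $G_n\in\mathcal P$. First I claim $(G_n)$ is bounded. If not, pass to a subsequence with $\|G_n\|\to\infty$ and normalize: $G_n/\|G_n\|$ has a limit point $G$ with $\|G\|=1$ and $G\in\mathcal P$, because $\mathcal P$ is closed. By linearity and continuity, $\Phi(G)=\lim p_n/\|G_n\|=0$, which contradicts the key step. So $(G_n)$ is bounded. Extract a convergent subsequence $G_n\to G\in\mathcal P$; then $p=\Phi(G)\in\SoS^d$.

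An alternative route to the same bound is to note that $\operatorname{tr}(G_n)$ is controlled by the coefficients of the monomials $\omega^*\omega$, but the compactness argument avoids that bookkeeping. Because all norms on finite-dimensional spaces are equivalent, the choice of norm on the coefficients does not matter.

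The main obstacle is the key step $\ker\Phi\cap\mathcal P=\{0\}$: one must check carefully that the factorization $\omega^*\omega=\omega_1^*\omega_2$ with $\omega_1\ne\omega$ forces unequal degrees. This holds because if $\deg(\omega_1)=\deg(\omega_2)=j$, then comparing the first $j$ letters gives $\omega_1^*=\omega^*$, so $\omega_1=\omega$, and likewise $\omega_2=\omega$.
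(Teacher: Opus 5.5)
Your proof is correct, but it takes a different route from the paper's. The paper, in its appendix, first uses Lemmas~\ref{lemmaA}--\ref{Lemmac} to construct a matrix tuple $\underline{\mathcal X}$ at which no nonzero polynomial of degree at most $2d$ vanishes, and takes $\|p(\underline{\mathcal X})\|$ as the norm. It then uses Carath\'eodory to write each $p_n$ as a sum of $N(2d)+1$ squares, and bounds every summand through $\|r_{j,n}(\underline{\mathcal X})\|^2\le\|p_n(\underline{\mathcal X})\|$.

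You replace the faithful evaluation with two ingredients. The Gram parametrization $\SoS^d=\Phi(\mathcal P)$ fixes a finite-dimensional parameter space from the outset, so Carath\'eodory is unnecessary. The combinatorial fact $\ker\Phi\cap\mathcal P=\{0\}$ then lets the recession-cone argument finish. Your version is more elementary and self-contained: it needs neither the strictly positive functional of Lemma~\ref{lemmaA} nor the truncated GNS construction. The paper's version is the evaluation-based template that it reuses for $\Sigma_f^d$ in Lemma~\ref{LemmaE}.

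Your template also transfers to that setting, with a gain. $\Sigma_f^d$ is the image of the same cone $\mathcal P$ under $G\mapsto\sum_{\omega_1,\omega_2}G(\omega_1,\omega_2)\,\omega_1^*f\omega_2$. The leading-term computation in the Introduction (the coefficient of $\omega^*u\omega$ equals $c\sum_i|a_i|^2$) shows that this map annihilates no nonzero positive semidefinite matrix when $f\neq0$. So your argument would give closedness of $\Sigma_f^d$ without the hypothesis $f(\underline a)>0$ that Lemma~\ref{LemmaE} requires. That lemma is the only place this hypothesis enters the proof of Theorem~\ref{main}.
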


\cyan{The following lemma is a modified analogue of \cite[Proposition 1.38]{Burg2016}.}
      \begin{lemma}\label{LemmaE}
            Let $f$ be a noncommutative symmetric polynomial and $d$ \cyan{be} a positive integer. Define $F=2 \deg(f) $.
            If there exists a vector $\underline{a}\in k^m$ \cyan{(regarded as $\mathrm{M_1}(k)^m$}) such that $f(\underline{a})>0$, then the set $\Sigma_f^{d}$ is closed in the finite \cyan{dimensional} linear space $k\langle\underline{X}\rangle^{2d+F}$
           \cyan{under the topology induced by the norm of noncommutative polynomial coefficients.}  \dede{All norms on a finite-dimensional vector space are equivalent.}      \end{lemma}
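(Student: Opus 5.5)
We follow the strategy behind Proposition~1.38 of~\cite{Burg2016}: reduce closedness of $\Sigma_f^{d}$ to compactness of a set of bounded coefficient data, using the scalar point $\underline{a}$ to obtain coercivity. First, $\Sigma_f^{d}$ is a convex cone inside the finite dimensional space $k\langle\underline{X}\rangle^{2d+F}$. Indeed, $\deg(g_i^*fg_i)\le 2d+\deg f\le 2d+F$. By Carath\'eodory's theorem, every element of $\Sigma_f^{d}$ can be written with a fixed number $r\le R:=\dim_{\mathbb R} k\langle\underline{X}\rangle^{2d+F}+1$ of summands. The same follows more concretely by writing $\sum_i g_i^*fg_i=V^d(\underline X)^* f\, V^d(\underline X)$-type expressions with a positive semidefinite Gram matrix $G=\sum_i \vec g_i\vec g_i^{\,*}$ of size $N(d)$, whose rank is at most $N(d)$. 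So $\Sigma_f^{d}$ is the image of the map $\Phi\colon (g_1,\dots,g_{R})\mapsto\sum_i g_i^*fg_i$ on $(k\langle\underline{X}\rangle_d)^{R}$. Equivalently, it is the image of the linear map $G\mapsto \sum_{\omega,\nu} G_{\omega\nu}\,\omega^* f\nu$ on the closed cone of positive semidefinite $N(d)\times N(d)$ matrices.

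Take a sequence $p_n=\Phi(G_n)\to p$ in $k\langle\underline{X}\rangle^{2d+F}$. It suffices to show that $(G_n)$ is bounded. Then a subsequence converges to some $G\succeq0$, and by continuity and linearity $p=\Phi(G)\in\Sigma_f^{d}$. Boundedness is the heart of the argument. Since $f(\underline a)>0$ and $f$ is continuous, there is a neighbourhood $U\subset\mathbb{R}^m$ of $\underline{a}$ with $f(\underline b)\ge c>0$ for all $\underline b\in U$. Here $\underline b$ ranges over real scalar tuples, which are self-adjoint $1\times1$ matrices. Evaluating at $\underline b\in U$, and using that the entries commute, gives
\[
p_n(\underline b)=f(\underline b)\sum_{\omega,\nu}G_{n,\omega\nu}\overline{\omega(\underline b)}\nu(\underline b)=f(\underline b)\,v(\underline b)^*G_nv(\underline b)\ge c\,v(\underline b)^*G_nv(\underline b),
\]
where $v(\underline b)=(\nu(\underline b))_\nu$. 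Integrating over $U$ against Lebesgue measure yields
\[
\int_U p_n \;\ge\; c\,\operatorname{tr}(G_n M),\qquad M:=\int_U v v^*\,d\underline b .
\]
The left side is bounded because $p_n\to p$ in coefficients and $U$ is bounded. If $M\succ0$, then $\operatorname{tr}(G_nM)\ge\lambda_{\min}(M)\operatorname{tr}G_n$, which bounds $G_n$ because $G_n\succeq0$.

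The main obstacle is that $M$ need not be positive definite. Distinct noncommutative monomials such as $XY$ and $YX$ evaluate to the same commuting function, so $v(\underline b)$ has linearly dependent entries. Scalar evaluations are therefore insufficient on their own, and we must handle the kernel. We first try to replace scalar points by matrix points: since $f(\underline a)>0$ at a scalar point, $f(\underline a\, I_n+\underline E)\succ0$ for all self-adjoint $\underline E$ in a small ball, in any dimension $n$. We then choose $n\ge N(d)$ and integrate over such a ball. We would evaluate $\operatorname{tr}\, p_n(\underline A)$, or better $\sum_j\langle p_n(\underline A)\xi_j,\xi_j\rangle$, for a generic frame $\xi_j$. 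The evaluation $p_n(\underline A)=\sum_i g_i(\underline A)^*f(\underline A)g_i(\underline A)\succeq c\sum_i g_i(\underline A)^*g_i(\underline A)$ is bounded below by a quadratic form in $G_n$. This form is positive definite because no nonzero polynomial of degree $\le d$ vanishes identically on an open set of self-adjoint $n\times n$ matrix tuples when $n$ is large, which is the standard nonexistence of identities of low degree. This coercivity gives the bound on $\operatorname{tr}G_n$ and completes the compactness argument.
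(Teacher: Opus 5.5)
Your proposal is correct and follows essentially the same route as the paper. Coercivity comes from $f(\underline{A})\succeq \delta I$ on a ball of self-adjoint matrix tuples around $I_n\otimes\underline{a}$, together with the fact that no nonzero polynomial of degree at most $d$ vanishes on such a ball (Lemma~\ref{Lemmac} plus the identity principle); a convergent subsequence of multipliers is then extracted.

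The differences are only in packaging. You parametrize $\Sigma_f^{d}$ by positive semidefinite Gram matrices and bound $\operatorname{tr}G_n$ with the linear functional $p\mapsto\int\operatorname{tr}\,p(\underline{A})\,d\underline{A}$ at one fixed size $n\ge N(d)$. The paper instead fixes the number of summands by Carath\'eodory and bounds each $r_{j,n}$ in the sup-norm $\|\cdot\|_{\underline{a}}$ over the ball in all sizes.
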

      \begin{proof}
      For $\underline{a}\in k^\cyan{m}$ and $\varepsilon>0$, define \[B(\underline{a}, \varepsilon) \coloneq \cup_{n\in \mathbb{N}}\{ \underline{A} \in S_n(k)^m \mid \| \underline{A} - I_n \otimes \underline{a} \| \le \varepsilon \} ,\]
      where $S_n$ denotes the \cyan{set of} self-adjoint matrices of dimension $n$, \cyan{and $I_n \otimes \underline{a}$=($a_1 I_n,\ldots, a_m I_n$)}. 
      Since there exists a vector $\underline{a}\in k^\cyan{m}$ such that $f(\underline{a})>0$, there exist  \cyan{$\varepsilon>0$ and $\delta>0$} such that  
      \[\cyan{ f(\underline{A})\succeq \delta I_n } \text{ for every }\underline{A}\in B(\underline{a}, \varepsilon).\]
      Let $p \in k\langle \underline{X} \rangle$, $\varepsilon > 0$, \cyan{if for all $\underline{A} \in B(\underline{a}, \varepsilon) $, the matrix $p(\underline{A})$ is zero}, then for all $n\in \mathbb{N},\underline{A} \in S_n(k)^m$, the matrix $p(\underline{A})$ is zero by the polynomial identity principle. Hence $p=0$ by Lemma~\ref{Lemmac} in Appendix.
      \cyan{Therefore, for such $\underline{a}$ and $\varepsilon>0$, we can introduce a norm on $k\langle\underline{X}\rangle^{2d+F}$ }
    \[ \|p\|_{\underline{a}}\coloneq\sup \{ \|p(\underline{A})\| | \underline{A} \in B(\underline{a}, \varepsilon) \} .\]
    It can be verified that this is a well-defined \cyan{norm}.
    Suppose a sequence of noncommutative polynomials
    $p_n \in \Sigma_f^{d}$ converges to $p \in k\langle\underline{X}\rangle^{2d+F}$. 
        Then $\|p_n\|_{\underline{a}}$ converges to $\|p\|_{\underline{a}}$, so that the sequence $\{\|p_n\|_{\underline{a}}\}$ is bounded. Since the set $\Sigma_f^{d}$ is a convex set in the real linear
        space $\Sym k\langle\underline{X}\rangle^{2d+F}$,
        by Carathéodory’s Theorem\cite{Reznick1992}, we can \cyan{assume} 
       \[p_n = \sum_{j=1}^{N(2d+F)+1} r_{j,n}^*f r_{j,n}.\]
     \cyan{As $f(\underline{A})\succeq \delta I_n $ for any $\underline{A}\in B(\underline{a}, \varepsilon)$, for each $j,n$, }
     $$  \cyan{ r_{j,n}(\underline{A})^*f (\underline{A})r_{j,n}(\underline{A})\succeq \delta r_{j,n}(\underline{A})^*r_{j,n}(\underline{A})} $$
$$  \cyan{ \|r_{j,n}^*f r_{j,n}\|_{\underline{a}} \geq \delta     \|r_{j,n}\|_{\underline{a}}^2.} $$
     \cyan{If  $\|p_n\|_{\underline{a}} \le M$, then} 
     $$ \delta  \|r_{j,n}(\underline{A})\|^2\le \|r_{j,n}^*f r_{j,n}\|_{\underline{a}} \leq M,$$
     \cyan{then} 
     \begin{eqnarray*}
    \|r_{j,n}(\underline{A})\|\le \sqrt{M/\delta },
\end{eqnarray*}
\cyan{and therefore}
\[ \|r_{j,n}\|_{\underline{a}}\le \sqrt{M/\delta }.\]

     By passing to a subsequence, there exists $r_j \in k\langle\underline{X}\rangle^d$ such that $r_{j,n}$ converges to $r_j$ for $j = 1, 2, \dots, N(2d+F)+1$ \cyan{under} the norm defined above.  Therefore, the sequence $\{p_n\}$ converges to a noncommutative polynomial $\cyan{p=}\sum^{N(2d+F)+1}_{j=1} r_j^*f r_j$ in $\Sigma_f^{d}$.
     \end{proof}

 The following are some results needed for the proof of Theorem~\ref{main} from \cite{Klee1955}. A $\phi-$cone in a topological linear space is a closed convex cone with origin vertex $\phi$; for a $\phi-$cone $A$, let
     \[
      A' := A \cap (-A)
     \]
     denote the linear subspace contained in $A$.
     \begin{lemma}\cite[Theorem 2.5]{Klee1955}\label{LemmaS}
         Suppose $A$ and $B$ are $\phi-$cones in a locally convex topological linear space $E$, $A$ is locally compact, and $A \cap B = \{0\}$. Then $E$ admits a continuous linear functional $f$ such that
\[
f < 0 \text{ on } A \setminus A', \quad
f = 0 \text{ on } A' \cup B', \quad
f \ge 0 \text{ on } B \setminus B'.
\]
\cyan{Moreover, if $B$ is also locally compact, then  $E$ admits a continuous linear functional $f$ such that}
\[
f < 0 \text{ on } A \setminus A', \quad
f = 0 \text{ on } A' \cup B', \quad
f > 0 \text{ on } B \setminus B'.
\]

\end{lemma}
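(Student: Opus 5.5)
The plan is to obtain the functional from a Hahn--Banach strict separation of a compact convex set from a closed convex cone. The compact set will be a base of the pointed part of $A$. Since $E$ need not be finite dimensional, the only real work is producing that base from local compactness. In the application to Theorem~\ref{main} all spaces are finite dimensional, and there several of the steps below become trivial.

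\emph{Step 1 (splitting off the lineality space).} Since $A$ is locally compact, $A'=A\cap(-A)$ is a locally compact Hausdorff topological vector space, hence finite dimensional. I choose a closed complement $E=A'\oplus E_0$; this exists because $A'$ is finite dimensional. I then set $A_0:=A\cap E_0$. This gives $A=A'+A_0$, where $A_0$ is a pointed, closed, locally compact convex cone.

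\emph{Step 2 (compact base).} Let $U$ be a closed convex neighbourhood of $0$ with $A_0\cap U$ compact. Put $S=A_0\cap\partial U$; it is compact and does not contain $0$. I take $K:=\operatorname{conv}(S)$, or its closure. I expect this step to be the main obstacle: I must show that $K$ is compact and that $0\notin K$. For this I would use pointedness of $A_0$ together with local compactness, via the standard argument that a pointed locally compact closed cone has a compact convex base. In finite dimensions it is immediate: any $0$ in the convex hull would give $v\in A_0$ with $-v\in A_0$, i.e. $v=0$. Every nonzero $x\in A_0$ is a positive multiple of a point of $K$.

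\emph{Step 3 (the cone to separate from).} Let $C:=B+A'$. This is a convex cone, and it is closed, because modulo the finite-dimensional subspace $A'$ it is the image of the closed cone $B$. Suppose $k\in K\cap C$, say $k=b+a'$ with $b\in B$ and $a'\in A'$. Then $b=k-a'\in A\cap B=\{0\}$, so $k\in A'\cap E_0=\{0\}$, which contradicts $0\notin K$. Hence $K\cap C=\emptyset$. Strict separation of the compact convex set $K$ from the closed convex set $C$ gives a continuous linear functional $g$ and a real $\alpha$ with
\[
g(k)<\alpha\le g(c)\qquad\text{for all } k\in K,\ c\in C.
\]
Since $C$ is a cone, $\alpha\le 0$ and $g\ge 0$ on $C$. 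On the subspaces $A'\subseteq C$ and $B'\subseteq C$ this forces $g=0$. Moreover $g<0$ on $K$, and hence $g<0$ on $A\setminus A'=A'+(A_0\setminus\{0\})$. Thus $g$ satisfies the first assertion.

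\emph{Step 4 (the symmetric case).} If $B$ is also locally compact, I apply Steps 1--3 with the roles of $A$ and $B$ exchanged. This yields $h$ with $h<0$ on $B\setminus B'$, $h=0$ on $A'\cup B'$, and $h\ge 0$ on $A$. Then $f:=g-h$ satisfies $f<0$ on $A\setminus A'$, $f=0$ on $A'\cup B'$, and $f>0$ on $B\setminus B'$.
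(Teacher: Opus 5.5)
The paper does not prove this lemma; it is quoted from Klee's Theorem 2.5, so your argument has to stand on its own. The architecture is sound: you split off the finite-dimensional lineality space $A'$, strictly separate a compact base of the pointed part $A_0$ from $C=B+A'$, and use $g-h$ for the second assertion. Steps 1 and 4, and the bookkeeping in Step 3, are correct. However, two steps are not justified as written.

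\textbf{Closedness of $C$.} Your reason (``modulo $A'$ it is the image of the closed cone $B$'') is false as a general principle. The image of a closed convex cone under the quotient by a finite-dimensional subspace need not be closed, even in $\mathbb{R}^3$. Take $B=\{(x,y,z): z\ge\sqrt{x^2+y^2}\}$ and $L=\operatorname{span}\{(1,0,1)\}$. The points $(t,1,\sqrt{t^2+1})-t(1,0,1)\in B+L$ converge to $(0,1,0)\notin B+L$. Strict separation from $K$ needs $K\cap\overline{C}=\emptyset$, so this step matters.

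What rescues it is a hypothesis you never invoke here: $A'\cap B\subseteq A\cap B=\{0\}$. Suppose $b_\alpha+a_\alpha\to x$ with $b_\alpha\in B$ and $a_\alpha\in A'$. If some subnet of $(a_\alpha)$ is bounded in the finite-dimensional space $A'$, it converges to some $a$, and $x-a\in B$ because $B$ is closed. Otherwise $\|a_\alpha\|\to\infty$ along a subnet (any norm on $A'$), and $a_\alpha/\|a_\alpha\|\to a$ with $\|a\|=1$. Then $b_\alpha/\|a_\alpha\|\to -a$, so $-a\in A'\cap B=\{0\}$, which is absurd. In the application to Theorem~\ref{main} both cones are pointed, so $C=B$ there and the issue does not arise. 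The lemma as stated, however, allows $A'\neq\{0\}$.

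\textbf{Step 2 in infinite dimensions.} You only appeal to ``the standard argument'', but this is precisely the nontrivial content of Klee's result, so it should be supplied. It is short:
\begin{enumerate}
\item $\overline{\operatorname{conv}}(S)$ is a closed subset of the compact convex set $A_0\cap U$, hence compact.
\item $0$ is an extreme point of $A_0\cap U$. Indeed, $0=tx+(1-t)y$ with $x,y\in A_0$ and $0<t<1$ gives $x,-x\in A_0$, so $x=0$ by pointedness.
\item If $0\in\overline{\operatorname{conv}}(S)$, then $0$ is an extreme point of this compact convex set. Milman's converse to the Krein--Milman theorem then gives $0\in\overline{S}=S$, contradicting $0\in\operatorname{int}U$.
\end{enumerate}

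You should also justify that every nonzero $x\in A_0$ has a positive multiple in $S$. The ray $\{tx:t\ge 0\}$ is closed and non-compact, so it cannot lie inside the compact set $A_0\cap U$. If $t_0$ is the largest $t$ with $tx\in U$, then $t_0x\in\partial U$.

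With these two repairs your argument is complete and gives a self-contained replacement for the citation.
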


 \begin{lemma}\label{LemmaF}
Let $d$ be an integer and let $f$ be a noncommutative symmetric polynomial. Then the following conditions are equivalent.
\begin{enumerate}[label=(\arabic*)]
            \item  For any  nontrivial separable $k$-Hilbert space $\mathcal{H}$ and every self-adjoint tuple $\underline{A}\in B(\mathcal{H})^m$ and nonzero vector $v\in \mathcal{H}$, if $g(\underline A)v\neq0$ for every nonzero $g\in k\langle\underline X\rangle$ with $\deg(g)\le d$,  the compression of $f(\underline{A})$ on $W_{(\underline{A},v)}$ is not negative definite, where the subspace $W_{(\underline{A},v)}:= \spans \{\omega(\underline{A}) v|\deg(\omega) \leq d\}$;
              \item  For any  nontrivial separable $k$-Hilbert space $\mathcal{H}$ with a fixed unit vector $e_1$ and every self-adjoint tuple $\underline{A}\in B(\mathcal{H})^m$ , if $g(\underline A)e_1\neq0$ for every nonzero $g\in k\langle\underline X\rangle$ with $\deg(g)\le d$,  the compression of $f(\underline{A})$ on $W_{\underline{A}}$ is not negative definite, where  the subspace $W_{\underline{A}}:= \spans \{\omega(\underline{A}) e_1| \\ \deg(\omega) \leq d\}$;
        \end{enumerate}
      \end{lemma}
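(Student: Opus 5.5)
We prove the two implications separately. The implication (1)$\Rightarrow$(2) is immediate: given $\mathcal H$, a fixed unit vector $e_1$ and a tuple $\underline A$ as in (2), take $v=e_1$. Then $v$ is nonzero, the hypothesis on $g(\underline A)e_1$ is exactly the hypothesis on $g(\underline A)v$, and $W_{(\underline A,v)}=W_{\underline A}$. So the conclusion of (1) gives the conclusion of (2).

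For (2)$\Rightarrow$(1), let $\mathcal H$, $\underline A$ and a nonzero vector $v$ be given as in (1). The plan is to normalize $v$ and move it to the distinguished unit vector by a unitary change of coordinates. Put $u=v/\|v\|$, which is a unit vector.

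First, the hypothesis transfers to $u$ by linearity: $g(\underline A)u=\|v\|^{-1}g(\underline A)v\neq0$ for every nonzero $g$ with $\deg(g)\le d$. Also $W_{(\underline A,u)}=W_{(\underline A,v)}$, since the two spanning sets differ only by the nonzero scalar $\|v\|^{-1}$.

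Next, if $e_1\neq u$, choose a unitary $U\in B(\mathcal H)$ with $Ue_1=u$. For instance, $U$ can be built by extending $e_1$ and $u$ to orthonormal bases of $\mathcal H$; both bases have the same cardinality because $\mathcal H$ is separable. Alternatively, use a Householder-type reflection on $\spans\{e_1,u\}$, combined with a phase in the complex case.

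Set $\underline B=U^*\underline A U$. This is again a self-adjoint tuple, and $p(\underline B)=U^*p(\underline A)U$ for every polynomial $p$. Consequently $g(\underline B)e_1=U^*g(\underline A)u\neq0$, so $\underline B$ satisfies the hypothesis of (2). It also gives $W_{\underline B}=U^*W_{(\underline A,u)}$.

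Finally, for $w\in W_{\underline B}$ we have
\[
\langle f(\underline B)w,w\rangle=\langle f(\underline A)Uw,Uw\rangle.
\]
Hence the compression of $f(\underline B)$ to $W_{\underline B}$ is unitarily equivalent, via $U$, to the compression of $f(\underline A)$ to $W_{(\underline A,v)}$. Negative definiteness is preserved under this equivalence. By (2) the former is not negative definite, so neither is the latter, which proves (1).

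The only step needing any care is the existence of the unitary $U$ and the remark that the dimension and separability of $\mathcal H$ are unchanged. Everything else is bookkeeping with the identity $p(U^*\underline AU)=U^*p(\underline A)U$.
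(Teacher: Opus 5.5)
Your proposal is correct and follows essentially the same route as the paper. The paper proves (1)$\Rightarrow$(2) by taking $v=e_1$, and proves (2)$\Rightarrow$(1) by conjugating with a unitary $U_v$ satisfying $U_v e_1 = v/\|v\|$, using $p(U_v^*\underline{A}U_v)=U_v^*p(\underline{A})U_v$ to transfer both the nondegeneracy hypothesis and the compression; you do the same, with the normalization step and the existence of $U$ spelled out a bit more explicitly.
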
 
    \begin{proof}

The implication (1)$\Rightarrow$(2) is obvious.\\
(2)$\Rightarrow$(1). 
For any nontrivial separable $k$-Hilbert space $\mathcal{H}$ with standard orthonormal basis $\{e_1,e_2,\ldots\}$ and any nonzero vector $v\in\mathcal{H}$, there exists a unitary operator $U_v$ such that 
\[\frac{v}{\|v\|}=U_v e_1,\]
For any self-adjoint tuple $\underline{A}\in B(\mathcal{H})^m$, define 
\[ U_v^* \underline{A} U_v=(U_v^* A_1 U_v,\ldots,U_v^* A_m U_v).\]
Then for every nonzero $g\in k\langle\underline X\rangle$ with $\deg(g)\le d$
\[g(\underline A)v\neq0 \Rightarrow g(U_v^* \underline{A} U_v)e_1\neq0. \]
\[
\begin{aligned}
W_{U_v^* \underline{A} U_v}=W_{(U_v^* \underline{A} U_v,e_1)}:&= \spans \{\omega(U_v^* \underline{A} U_v) e_1|\deg(\omega) \leq d\} \\
  &= \spans \{U_v^*\omega( \underline{A})  U_v e_1|\deg(\omega) \leq d\}\\
  &=\spans \{U_v^*\omega( \underline{A}) v|\deg(\omega) \leq d\}\\
  &=U_v^*W_{(\underline{A},v).}
\end{aligned}\]
Condition (2) implies that the compression of $f(U_v^* \underline{A} U_v)$ on $W_{U_v^* \underline{A} U_v}$ is not negative definite. By the following equation
\[
f(U_v^* \underline{A} U_v)= U_v^* f(\underline{A}) U_v
\]
it can be concluded that the compression of $f(\underline{A})$ on $W_{(\underline{A},v)}$ is not negative definite.
    \end{proof}
Everything is now in place to prove Theorem~\ref{main}.

   \begin{proof}[Proof of  Theorem~\ref{main}]

            (2)$\Rightarrow$(1). Let 
            \[d=\max\{\deg(g_1),\deg(g_2),\ldots,\deg(g_r)\}.\]
        For any  nontrivial $k$-Hilbert space $\mathcal{H}$ with standard orthonormal basis $\{e_1,e_2,\ldots\}$ and self-adjoint tuple $\underline{A}\in B(\mathcal{H})^m$, 
        \[\sum_{i=1}^{r}g^*_i(\underline{A}) f(\underline{A}) g_i(\underline{A})\succeq 0.\]
        Then  we have
        \begin{eqnarray*}
         e_1^*\sum_{i=1}^{r}g^*_i (\underline{A}) f(\underline{A}) g_i(\underline{A})e_1&=&\sum_{i=1}^{r}(g_i (\underline{A})e_1)^* f(\underline{A}) g_i(\underline{A})e_1 , 
        \end{eqnarray*}
        If $g(\underline A)e_1\neq0$ for every nonzero $g\in k\langle\underline X\rangle$ with $\deg(g)\le d$, nonzero vector
        $g_i(\underline{A})e_1$ belongs to the subspace $W_{\underline{A}}$, \cyan{which implies} that the compression of $f(\underline{A})$ on $W_{\underline{A}}$ is not negative definite.\\
        \cyan{ To prove (1)$\Rightarrow$(2), it suffices to prove $\neg (2) \Rightarrow \neg (1)$.} \dede{The negation of condition~(2) can be equivalently stated as follows.}\\
        \textit{ $\neg(2)$ For any $d\in \mathbb{N}$, there do not exist $r \in \mathbb{N}^{+} $ and  noncommutative polynomials $g_1,\dots g_r\in k\langle\underline{X}\rangle$  such that 
            \[\max\{\deg(g_1),\deg(g_2),\ldots,\deg(g_r)\}\leq d,\]
			\begin{equation*}
				0\neq\sum_{i=1}^r{g_i^* f g_i}\in{\rm SoS}.
			\end{equation*}}
For a fixed $d\in \mathbb{N}$, the above statement  means $\SoS^{d+1+F/2} \cap \Sigma^{d}_f=\{0\}$.

        By Lemma~\ref{LemmaD}, the set $\SoS^{d+1+F/2}$ is a closed cone \cyan{in} $k\langle\underline{X}\rangle^{2d+2+F}$. By assumption in Theorem~\ref{main}, there exists a vector $\underline{a}\in k^m$, such that $f(\underline{a})>0$. Then by Lemma~\ref{LemmaE}, the set $\Sigma^{d}_f $ is a closed convex cone. 
\dede{In fact, both cones are contained in the real vector space $\Sym k\langle\underline{X}\rangle^{2d+2+F}$, which consists of symmetric noncommutative polynomial in $k\langle\underline{X}\rangle^{2d+2+F}$.
}
        
        By Lemma~\ref{LemmaS}, \cyan{both $\SoS^{d+1+F/2}$ and $\Sigma^{d}_f$} are $\phi$-cones in a locally convex topological vector space $k\langle\underline{X}\rangle^{2d+2+F}$. Since  $\SoS^{d+1+F/2}$ and $\Sigma^{d}_f$ are closed in this finite-dimensional space, they are locally compact. Then there exists a continuous linear functional \dede{ $\widetilde\mu:\Sym k\langle\underline{X}\rangle^{2d+2+F}\rightarrow \mathbb{R}$} such that 
        \[\widetilde\mu(\Sigma^{d}_{f}\setminus \{0\} ) < 0 , \quad
        \widetilde\mu(\SoS^{d+1+F/2}\setminus \{0\}) > 0 \]
       \dede{ When $k=\mathbb{R}$, then
$\widetilde\mu$ can be extended to a real linear functional on $\mathbb{R}\langle\underline{X}\rangle^{2d+2+F}$ by
\[
 \mu(f)
=
\widetilde\mu\left(\frac{f+f^*}{2}\right),
\qquad f\in \mathbb{R}\langle\underline{X}\rangle^{2d+2+F}.
\]
When $k=\mathbb{C}$, $\widetilde\mu$ has a unique complex linear extension to $\mathbb{C}\langle\underline{X}\rangle^{2d+2+F}$, defined by
\[
 \mu(f)
=
\widetilde \mu\left(\frac{f+f^*}{2}\right)
+
i\widetilde\mu\left(\frac{f-f^*}{2i}\right),\qquad f\in \mathbb{C}\langle\underline{X}\rangle^{2d+2+F}.
\]}
By the proof of Lemma~\ref{lemmaB}, there exists a Hilbert space $\mathcal{K}$ of dimension $N(d+F/2)$, a vector $\hat{1} \in \mathcal{K}$, and a self-adjoint operator tuple $\underline{\mathcal{X}}\in B(\mathcal{K})^m$ such that
        \[p(\underline{\mathcal{X}})\hat{1} = p,\]
        and 
      \[
       \langle p(\underline{\mathcal{X}})\hat{1},\, q(\underline{\mathcal{X}})\hat{1} \rangle = \dede{ \mu}(q^*p)
      \]
      for all $p, q\in k\langle\underline{X}\rangle^{d+F/2}$.
     Define the linear subspace 
     \[W_{(\mathcal{\underline{X}},\hat{1})}:=\spans\{\omega(\mathcal{\underline{X}})\hat{1}|\deg(\omega) \leq d\}.\]
     Moreover, for every nonzero polynomial
$g\in k\langle\underline{X}\rangle$ with $\deg(g)\leq d$, we have
\[
\|g(\mathcal{\underline{X}})\hat{1}\|^2
=
\dede{ \mu}(g^*g)>0,
\]
which implies that
$g(\mathcal{\underline{X}})\hat{1}\neq0$.
     Furthermore, for every nonzero vector $\eta\in W_{(\mathcal{\underline{X}},\hat{1})}$, there exists a noncommutative polynomial $g$ whose degree is \cyan{at most} $d$ such that $\eta= g(\mathcal{\underline{X}})\hat{1} $, then

\[
f(\mathcal{\underline{X}})g(\mathcal{\underline{X}})\hat{1}=(fg)(\mathcal{\underline{X}})\hat{1},
\qquad
\deg(fg)\leq d+F.
\]

\[
\left\langle f(\mathcal{\underline{X}})g(\mathcal{\underline{X}})\hat{1},\; g(\mathcal{\underline{X}})\hat{1} \right\rangle
=
\mu\!\left(g^{*}fg\right)< 0,
\]
    which shows that the compression of $f(\mathcal{\underline{X}})$ on $W_{(\mathcal{\underline{X}},\hat{1})}$ is negative definite. Therefore, by Lemma~\ref{LemmaF}, condition~(1) does not hold.

      \end{proof}

\begin{remark}
Condition~(1) in Theorem~\ref{main} should not be understood as a stronger version of Condition~(1) in the open problem of~\cite{IgorMarkus2007nicht}; however, Condition (2) of the open problem is stronger than Condition (2) of
Theorem~\ref{main}, since \(1+{\rm SoS}\subseteq {\rm SoS}\). 
\end{remark}

% \section{\cyan{Neither $\RSoS$ nor Its Complement is Convex}}
% In contrast to \cyan{$\SoS$}, \cyan{$\RSoS$} is \cyan{non-convex, as \dede{it will be} demonstrated by constructing explicit noncommutative polynomials $f_1$ and $f_2$, and showing that $f_1,f_2\in\RSoS$ but $1/2(f_1+f_2)\notin \RSoS$.}

\section{Neither $\RSoS$ nor Its Complement Is Convex}

In contrast to $\SoS$, which is a convex cone, $\RSoS$ is not convex when
the free algebra has at least three generators. We prove this by constructing
explicit symmetric noncommutative polynomials $f_1,f_2\in\RSoS$ such that
\[
    \frac{1}{2}(f_1+f_2)\notin\RSoS.
\]
We further show that the complement of $\RSoS$ is also not convex.

 \begin{theorem}\label{Thm:RSOS-notcvx}
  % The set $\RSoS$ for the free associative algebra $k\langle X_1,\ldots,X_m\rangle$  with  at least $m \geq 3$ generators is \textbf{not convex}.
  For the free associative algebra $k\langle X_1,\ldots,X_m\rangle$ with $m \geq 3$, the set $\RSoS$ is {not convex}.
 \end{theorem}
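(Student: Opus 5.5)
We will exhibit $f_1,f_2\in\RSoS$ with $\tfrac12(f_1+f_2)\notin\RSoS$, using the necessary condition of Lemma~\ref{lemma leading polynomial} to certify non-membership. The natural strategy is to let $f_1$ and $f_2$ share the same top degree, so that $\operatorname{ld}(\tfrac12(f_1+f_2))=\tfrac12(\operatorname{ld}(f_1)+\operatorname{ld}(f_2))$. However, each $\operatorname{ld}(f_i)$ must itself lie in $\SoS$, which is a convex cone; hence the leading part of the average is automatically in $\SoS$, and Lemma~\ref{lemma leading polynomial} gives nothing in that case. So we arrange \emph{cancellation} in degrees instead. We take $f_1$ of degree $D$ with $\operatorname{ld}(f_1)\in\SoS$, and $f_2$ of the same degree whose top part cancels against that of $f_1$ except for a non-$\SoS$ lower-degree remainder. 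The negative parts of $f_2$ may still sit in degree $D$, provided $f_2$ is rescued by multipliers $g_i$ as in Example~\ref{example}.

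A cleaner realization, using a third variable, is as follows. Put $Z=X_3$ and let $h:=X_1X_2^2X_1-X_2X_1^2X_2$, the leading polynomial of $f_0$, which is not in $\SoS$ by the remark after Lemma~\ref{lemma leading polynomial}. We look for
\[
f_1=Z^2+h+c,\qquad f_2=-Z^2+h+c'
\]
or a similar pair with $\deg Z$-terms dominating. Then $\tfrac12(f_1+f_2)=h+\tfrac{c+c'}{2}$, whose leading polynomial is $h\notin\SoS$, so the average is not in $\RSoS$ by Lemma~\ref{lemma leading polynomial}. The difficulty is that $f_2$ has leading part $-Z^2$-type, which is not $\SoS$. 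Therefore we instead make the $Z$-part of top degree in $f_1$ with a positive sign and use $Z$ as an \emph{auxiliary multiplier} variable. Concretely, we try
\[
f_1=Z^4+Z h Z+(\text{correction}),\qquad f_2=-Z^4+\dots
\]
This fails for the same reason, so the decisive adjustment is to keep the top-degree part identical and $\SoS$ in both $f_i$, for instance $ZX_1^2Z+ZX_2^2Z$, and place $\pm$ asymmetric cross terms of top degree. Each individual $f_i$ is then non-$\SoS$ (as in Example~\ref{example}) but belongs to $\RSoS$ via a multiplier such as $g_1=1$, $g_2=Z$. The cross terms cancel in the average and leave a leading part built from $h$. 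Since every $\RSoS$ element has $\SoS$ leading part, the cancelling cross terms must be of equal degree to the $\SoS$ part and non-$\SoS$ themselves. This mirrors $P$: there $Y^2X^2Y+YX^2Y^2$ sits in degree $5$, below the leading part $2Y^2X^2Y^2$.

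The plan is therefore: (i) fix $f_1=Q+R$ and $f_2=Q-R+S$ with $Q$ homogeneous $\SoS$ of top degree; (ii) verify $f_1,f_2\in\RSoS$ by explicit multipliers and explicit Hermitian-square decompositions, as in the proof of Theorem~\ref{definition}; (iii) show that the average $Q+\tfrac12 S$ is not in $\RSoS$. For step (iii), Lemma~\ref{lemma leading polynomial} alone does not suffice because $Q\in\SoS$. We would therefore refine its proof, inspecting the principal submatrix of a coefficient matrix of $\sum g_i^*(Q+\tfrac12 S)g_i$ indexed by monomials ending in a top-degree word $\omega$ of the $g_i$, in the next-to-top degree layer, to show that the non-$\SoS$ part $S$ (built from $h$, with coefficient chosen to violate positivity) obstructs every multiplier. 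This generalized ``leading-part'' obstruction in step (iii), together with finding $R$ so that both $f_1$ and $f_2$ admit explicit certificates, is the main obstacle. The third generator is what should give enough room to separate the roles of $Q$, $R$ and $S$.
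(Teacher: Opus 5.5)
There is a genuine gap: the proposal is a search strategy, not a proof. You never fix $f_1,f_2$. You never give multipliers and Hermitian-square decompositions certifying $f_1,f_2\in\RSoS$. Step (iii), the only substantial step, is explicitly left open.

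Your preliminary observation is correct, and it shows that Lemma~\ref{lemma leading polynomial} can \emph{never} detect non-convexity. If $f_1,f_2\in\RSoS$, then $\operatorname{ld}(f_1),\operatorname{ld}(f_2)\in\SoS$. Since $\SoS$ is a pointed convex cone, no top-degree cancellation can occur, so the leading part of $\tfrac12(f_1+f_2)$ is again in $\SoS$. Everything therefore rests on the unspecified lower-degree obstruction.

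The intermediate sketch is also inconsistent. If $\pm R$ sits in top degree, Lemma~\ref{lemma leading polynomial} forces $Q\pm R\in\SoS$, so the average has leading part $Q$, not one ``built from $h$''. One concrete suggestion is also fatal: if both $f_i$ are rescued by the \emph{same} multipliers, e.g.\ $g_1=1$, $g_2=Z$, then $\sum_i g_i^*(f_1+f_2)g_i\in\SoS$ and it is nonzero, so the average lies in $\RSoS$. For a fixed multiplier family, the admissible $f$ form a convex cone. Non-convexity therefore requires $f_1$ and $f_2$ to need incompatible multipliers, and you must prove that no single family serves the sum.

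This is what the paper does. It takes $f_1=P(X,Y)$ and $f_2=P(X,Z)$ from Example~\ref{example}, rescued by $\{1,Y\}$ and $\{1,Z\}$ respectively. It then assumes $\sum_i g_if_3g_i^*=\sum_j h_jh_j^*$ for $f_3=f_1+f_2$ and tracks coefficients, where $uY$ is a multiplier word of maximal degree $D$ and $c_{i,uY}$ denotes its coefficient in $g_i$:
\begin{enumerate}
\item Comparing degree $2D+6$ shows that every top-degree word of the $h_j$ ends in $Y^2X$ or $Z^2X$.
\item The mixed word $uYZX^2Z^2Yu^*$ has coefficient $\sum_i|c_{i,uY}|^2>0$ on the left. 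On the right it can then only come from $(uYZX)(uYZ^2X)^*$, so $uYZX$ occurs in some $h_j$.
\item Consequently $uYZX^2ZYu^*=(uYZX)(uYZX)^*$ has coefficient $\sum_j|c_{j,uYZX}|^2>0$ on the right. But no factorization $a\,m\,b^*$ with $m$ a word of $f_3$ and $\deg a,\deg b\le D$ produces it on the left.
\end{enumerate}
When the top multiplier word ends in a letter $T\neq Y$, the words $uTYX^2Y^2Tu^*$ and $uTYX^2YTu^*$ play the same role. The missing idea is this coefficient tracking below the top degree, driven by the fact that one final multiplier letter cannot serve both the $Y$-copy and the $Z$-copy.
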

\begin{comment}
\begin{theorem}\label{Thm:RSOS-notcvx}
Let \(m\ge 3\). Then the set \(\RSoS\) in the free algebra
\(k\langle X_1,\ldots,X_m\rangle\) is not convex.
\end{theorem}   
\end{comment}

\begin{proof}
Define $f_1$ and $f_2$ in $k\langle X,Y,Z\rangle$  as 
\begin{eqnarray*}
    &f_1\coloneq2Y^2X^2Y^2+Y^2X^2Y+YX^2Y^2+X^2,\\
    &f_2\coloneq2Z^2X^2Z^2+Z^2X^2Z+ZX^2Z^2+X^2,
\end{eqnarray*}
% \dede{It will be proved} that $f_1$ and $f_2$ are both in $\RSoS$, but $f_1+f_2\notin \RSoS$.  

We prove that $f_1,f_2\in\RSoS$, whereas $f_1+f_2\notin\RSoS$. Since
$\RSoS$ is invariant under multiplication by positive scalars,
\[
    \frac{1}{2}(f_1+f_2)\notin\RSoS.
\]
Therefore, $\RSoS$ is not convex in any free algebra with at least three
generators.

By Theorem~\ref{definition}, $f_1,f_2$ are noncommutative $\RSoS$ polynomials since
\begin{eqnarray*}
  f_1+Yf_1Y&\in\SoS,\\
  f_2+Zf_2Z&\in \SoS.
\end{eqnarray*}
Define
    \[f_3\coloneq f_1+f_2.\]
We will  prove that \cyan{$f_3 \notin \RSoS$}, equivalently, there do not exist finitely many noncommutative polynomials $\{g_i\}_{i=1}^r$ and $\{h_j\}_{j=1}^t$ such that
\[\sum_{i=1}^{r}g_i f_3 g_i^*=\sum_{j=1}^{t} h_jh_j^*. \]
% \cyan{In this proof, we use the equivalent rational
% sum-of-squares form above instead of $\sum_{j=1}^{t} h_j^*h_j$ for clarity when counting and comparing variables.}
\cyan{In this proof, we use the equivalent formulation of rational sums of squares given above, with $\sum_{i=1}^{r}g_i f_3 g_i^*$ in place of $\sum_{i=1}^{r}g_i^* f_3 g_i$, since it facilitates the counting and comparison of variables.}

The statement is proved by contradiction.
For any integer $r\in\mathbb{N}^{+}$ and noncommutative polynomials $g_1,\ldots,g_r$, 
   \[g_i=\sum_{k=1}^{N_i}\alpha_{ik}\omega_{ik},~\text{where}~1\leq i\leq r~\text{and}~N_i\in \mathbb{N}.\]

   Define $F=\sum_{i}g_i\dede{f_3}g_i^*$ and $D\coloneqq \max_{1\le i\le r} \deg(g_i)$. 
    Without loss of generality, suppose $D\geq 1$.   
     \cyan{Indeed, when $D=0$, there exists $c> 0$ and $\{h_j\}_{j=1}^t$  such that
     \[cf_3=\sum_{j=1}^{t} h_jh_j^*,\]
     therefore
     \[Xf_3X^*=\frac{1}{c}\sum_{j=1}^{t} Xh_j(Xh_j)^*,\]
  where $X^*=X$, which  reduces  to the case where $r=1$, $g_1=X$ and $D=1$.
    For $D \geq 1$,} there exists a monomial $\omega$ in $g_i$ for some $1\leq i \leq r$ such that $\deg(\omega)=D$. The last variable of the word $\omega$ can be $X$, $Y$, or $Z$. \cyan{We first consider the case in which the final letter of $\omega$ is $Y$,} and thus $\omega=uY$ for some monomial $u$ with $\deg(u)=D-1$.
    
For any  $d \in \mathbb{N}$, $p\in  \mathbb{C}\langle X,Y,Z\rangle$, let $p^{(d)}$ be the set of words appearing (that is, with non-zero coefficient) in $p$ of degree exactly $d$, i.e.,
\[ p^{(d)}\coloneq\{v\in\langle X,Y,Z\rangle| \deg (v)=d, \text{ coefficient of } v \text{ is non-zero}\}, \]
and $ p^{(d)*}$  is the set $\{v^*| v \in p^{(d)}\}$. If $S_1$ and $S_2$ 
  are two sets of words, then  $S_1S_2$  is defined as the set $\{v_1v_2| v_1 \in S_1,v_2 \in S_2\}$.
  
  Next, assume that  $\sum_{i}g_i\dede{f_3}g_i^*=\sum_{j}h_jh_j^*.$ 

\paragraph{Step 1. \cyan{Prove  that  every word in $h_j^{(D+3)}$ must end with $Y^2X$ or $Z^2X$.}
}
Since $F=\sum_{i}g_if_3g_i^*$, with $\deg(F)=2D+6$, 
\begin{equation*}
    F^{(2D+6)}\subseteq \{v_1 Y^2X^2Y^2 v_2^* |  v_1,v_2  \in \cup_i g^{(D)}_i\}\cup \{v_1 Z^2X^2Z^2 v_2^*| v_1,v_2 \in \cup_i g^{(D)}_i\}.
\end{equation*}

As $F=\sum_j h_jh_j^*$, we know that words in $h_j^{(D+3)}$ must end with  $Y^2X$ or $Z^2X$, i.e.,
\begin{equation*}
h_j^{(D+3)}\subseteq \{vY^2X |\deg(v)=D \}\cup \{vZ^2X |\deg(v)=D\}, \forall j \in \dede{\{1,2,\ldots,t\}}.
\end{equation*}
Note that there is no word ending with $YZX$ that appears in $h_j^{(D+3)}$.

\paragraph{Step 2. Prove  that $uYZX$ appears in $\cup_jh_j^{(D+2)}$.}
Consider the word $uYZX^2Z^2Yu^*$ and
\begin{equation*}
F^{(2D+5)}\subseteq  \bigcup_{\cyan{i}} \left( g_i^{(D)}f_3^{(5)}g_i^{(D)*} \cup  g_i^{(D-1)}f_3^{(6)}g_i^{(D)*} \cup g_i^{(D)}f_3^{(6)}g_i^{(D-1)*} \right).
\end{equation*}
Note that by the construction of $f_3$, the variables at positions  $D+1$, $D+2$, $D+3$ of words in  $ g_i^{(D)}f_3^{(6)}g_i^{(D-1)*}$ are always $Z^2X$ or  $Y^2X$ and cannot be $YZX$, which is similar to $g_i^{(D-1)}f_3^{(6)}g_i^{(D)*} $.
$$uYZX^2Z^2Yu^*\notin  \bigcup_{i}  \left(g_i^{(D-1)}f_3^{(6)}g_i^{(D)*} \cup g_i^{(D)}f_3^{(6)}g_i^{(D-1)*} \right).$$ 
Hence, $uYZX^2Z^2Yu^* \in  \bigcup_{i} g_i^{(D)}f_3^{(5)}g_i^{(D)*}$ and it always appears with positive coefficients, as $uY$ appears in some $g_i^{(D)}$. Moreover,  the coefficient of 
$uYZX^2Z^2Yu^*$ in $\sum_i g_if_3g_i^*$ is non-zero.

Now, \dede{consider the word}
\begin{equation*}
    uYZX^2Z^2Yu^* \in \left(\bigcup_{j} h_j^{(D+2)} h_j^{(D+3)*} \cup h_j^{(D+3)} h_j^{(D+2)*} \right).
\end{equation*}

\begin{enumerate}
\item If $ uYZX^2Z^2Yu^* =\omega_1\omega_2 $ for some $ \omega_1\in  h_j^{(D+3)}$ and $  \omega_2 \in h_j^{(D+2)*}$ for some $j$, then  $\omega_1$ must end with $Y^2X$ or $Z^2X$, and then the variables at positions  $D+1$, $D+2$, $D+3$ of the word $uYZX^2Z^2Yu^*$ must be $Y^2X$ or $Z^2X$ respectively, which is not true.
\item If $ uYZX^2Z^2Yu^* =\omega_1\omega_2 $ with some $ \omega_1\in  h_j^{(D+2)}$ and $  \omega_2 \in h_j^{(D+3)*}$ for some $j$, then the word $\omega_1=uYZX \in  h_j^{(D+2)}$.
\end{enumerate}

\paragraph{Step 3. Prove  that the  word $uYZX^2ZYu^*$  appears in $\sum_jh_jh_j^*$}
Now, consider the word $uYZX^2ZYu^*$, 
$$\deg( uYZX^2ZYu^*)=2D+4,$$
it remains to consider the following sets
\begin{equation*}
    h_j^{(D+2)}h_j^{(D+2)*},  h_j^{(D+3)}h_j^{(D+1)*} ,  h_j^{(D+1)}h_j^{(D+3)*}.
\end{equation*}
The variables at positions  $D+1$,$D+2$,$D+3$ of the  words in $h_j^{(D+3)}h_j^{(D+1)*}$  must be $Z^2X$ or $Y^2X$, thus $uYZX^2ZYu^*$ is not in this set, and  similarly $uYZX^2ZYu^* \notin h_j^{(D+1)}h_j^{(D+3)*}$. \dede{It has already been proved that} $uYZX \in  h_j^{(D+2)}$ for some $j$.
Hence, the coefficients of the   word $uYZX^2ZYu^*$ from $ h_j^{(D+2)}h_j^{(D+2)*}$ in the polynomial $\sum_j h_jh_j^*$ must be \dede{positive}, \cyan{
indeed, the only decomposition of the 
word  $uYZX^2ZYu^*$ in $h_j^{(D+2)}h_j^{(D+2)*}$  must be in the form of  $uYZX(uYZX)^*$.}

\paragraph{Step 4. Prove  that the word $uYZX^2ZYu^*$ does not appear in $\sum_ig_if_3g_i^*$}

Now, consider the word $uYZX^2ZYu^*$ as well as the set $F^{(2D+4)}$; note that $f_3^{(4)}$ is empty, therefore
\begin{eqnarray*}
    F^{(2D+4)}\subseteq && \bigcup_{i} \left( g_i^{(D-1)}f_3^{(6)}g_i^{(D-1)*} \cup  g_i^{(D-1)}f_3^{(5)}g_i^{(D)*} \cup g_i^{(D)}f_3^{(5)}g_i^{(D-1)*}\right.\\
    && \left.\cyan{\cup g_i^{(D)}f_3^{(6)}g_i^{(D-2)*} \cup g_i^{(D-2)}f_3^{(6)}g_i^{(D)*}} \right).
\end{eqnarray*}
Then, we have
\begin{enumerate}
\item  The variables at positions  $D$, $D+1$, $D+2$ of the   words in $g_i^{(D-1)}f_3^{(6)}g_i^{(D-1)*}$  must \dede{appear in} $f_3^{(6)}$ and must be $Z^2X$ or $Y^2X$, and cannot be $YZX$, thus the word $uYZX^2ZYu^*$ is not in $g_i^{(D-1)}f_3^{(6)}g_i^{(D-1)*}$. 
\item  For words $\omega'$ in $g_i^{(D-1)}f_3^{(5)}g_i^{(D)*}$ and $g_i^{(D)}f_3^{(5)}g_i^{(D-1)*}$, at least one of the conditions is satisfied:
\begin{itemize}
    \item The variables at positions \cyan{$D$, $D+1$, $D+2$, $D+3$ of $\omega'$ must be $Z^2X^2$, $ZX^2Z$, $YX^2Y$ or $Y^2X^2$, which comes from  words in  $f_3^{(5)}$.}; it cannot be $YZX^2$; thus, $uYZX^2ZYu^*$ is not in these sets.
    \item The variables at positions \cyan{$D+2$,  $D+3$, $D+4$, $D+5$ of $\omega'$ must be either  $X^2Z^2$,$ZX^2Z$, $YX^2Y$ or $X^2Y^2$.} cannot be $X^2ZY$; thus, $uYZX^2ZYu^*$ is not in these sets.  
\end{itemize}
\item  \cyan{The variables at positions  $D+1$, $D+2$, $D+3$ of the   words in   $g_i^{(D)}f_3^{(6)}g_i^{(D-2)*}$,  must appear in $f_3^{(6)}$ and must be either $Z^2X$ or $Y^2X$, and cannot be $ZX^2$, thus the word $uYZX^2ZYu^*$ is not in $g_i^{(D)}f_3^{(6)}g_i^{(D-2)*}$. However, the letters at these positions in
$uYZX^2ZYu^* $ are $ZX^2$. Similarly, the term $g_i^{(D-2)}f_3^{(6)}g_i^{(D)*}$ is excluded analogously by reading the variables from right to left.}
\end{enumerate}
So  $uYZX^2ZYu^*$ does not appear in $\sum_i g_if_3g_i^*$, but it appears in $\sum_j h_jh_j^*$, a contradiction.

\cyan{If the last letter of  $\omega$ is not $Y$, denote it by $T$. We then track the coefficient of the word $uTYX^2YTu^*$. 
Repeating Steps 2-4 with
$uTYX^2Y^2Tu^*$  and 
$uTYX^2YTu^*$
in place of
$uYZX^2Z^2Yu^*$ and $uYZX^2ZYu^*$
gives the same contradiction. This statement is also suitable for the case of more than three generators, since $T$ may be any generator other than $Y$.
}

\end{proof}

In the commutative case, by Artin's solution to Hilbert's seventeenth problem, the set of polynomials that can be written as sums of squares of rational functions is exactly the set of nonnegative polynomials. Hence this set is convex. In contrast, in the noncommutative case, $\RSoS$ is not convex. This shows a fundamental difference between the commutative and noncommutative cases, and explains why Theorem~\ref{main} has a more complicated form.

% \begin{theorem}\label{Thm:RSOSc-notcvx}
%  The set $\RSoS^c$ for the free algebra \cyan{is \textbf{not convex}}.
% \end{theorem}
% \begin{proof}
% \cyan{Let $X$ be a generator of  the given free algebra. Define 
% $f_1\coloneq X$ and $f_2 \coloneq -X$, then $f_1$ and $f_2$ are noncommutative homogeneous polynomials, and clearly $f_1,f_2 \notin \SoS$.  By Theorem~\ref{thm:homogeneous:RSOS=SOS}, $f_1,f_2 \notin \RSoS$. However, $f_1+f_2 =0 \in \RSoS$, hence  $\RSoS^c$  is not convex.
% }
% \end{proof}

\begin{theorem}\label{Thm:RSOSc-notcvx}
Let $k\langle X_1,\ldots,X_m\rangle$ be the free associative algebra over $k$ with
$m\geq 1$. Then the complement $\RSoS^c$ in the vector space $\cyan{\Sym} k\langle X_1,\ldots,X_m\rangle$ is not convex. 
\end{theorem}

\begin{proof}
Let $X$ be one of the generators and define
\[
    f_1\coloneq X,
    \qquad
    f_2\coloneq -X.
\]
Both $f_1$ and $f_2$ are symmetric homogeneous polynomials of degree one.
Since every nonzero sum of Hermitian squares has even degree, neither
$f_1$ nor $f_2$ belongs to $\SoS$. By
Theorem~\ref{thm:homogeneous:RSOS=SOS}, it follows that
\[
    f_1,f_2\notin\RSoS.
\]
However,
\[
    \frac{f_1+f_2}{2}=0\in\RSoS.
\]
Thus, two elements of $\RSoS^c$ have a midpoint outside $\RSoS^c$, and
therefore $\RSoS^c$ is not convex.
\end{proof}

\begin{comment}

\begin{remark}
Similar to Theorem~\ref{Thm:RSOS-notcvx}, Theorem~\ref{Thm:RSOSc-notcvx} also corresponds to the nonexistence of some equivalent forms.

Let $\underline{X} = (X_1, X_2, \ldots, X_m)$ be generators with $m \geq 2$. 
Then there \textbf{do not exist} families of linear spaces $\{V_i\}_{i \in I}$, 
convex sets $\{C_i \subseteq V_i\}_{i \in I}$, and linear maps 
$\{\psi_i: \mathbb{C}\langle \underline{X}\rangle \to V_i\}_{i \in I}$ 
such that the following equivalence holds for all $f \in \mathbb{C}\langle \underline{X}\rangle$:
\[
f \in \RSoS \quad \Longleftrightarrow \quad \exists   i \in I \text{ such that  }  \psi_i(f) \notin C_i.
\]
\end{remark}

\end{comment}

\bibliographystyle{plain}
\bibliography{optimization}

\appendix

\section*{Appendix}
\setcounter{theorem}{0}
\renewcommand{\thetheorem}{A.\arabic{theorem}}
        \begin{lemma}\label{lemmaA}
            There exists a linear functional $\mu : k\langle\underline{X}\rangle_{2d} \to k$ such that $\mu(\SoS^{d}\setminus0) > 0$.
        \end{lemma}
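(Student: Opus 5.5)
We need a linear functional $\mu$ on $k\langle\underline{X}\rangle_{2d}$ with $\mu(\sigma)>0$ for every nonzero $\sigma\in\SoS^{d}$. The plan is to write one down explicitly via the Gram-matrix description recalled before Theorem~\ref{definition}. Take $\mu$ to be the coefficient functional
\[
\mu(p)\coloneq \sum_{\deg(\omega)\le d} \bigl(\text{coefficient of } \omega^*\omega \text{ in } p\bigr),
\]
or more robustly $\mu(p)=\sum_{|\omega|\le d} c_{\omega^*\omega}(p)$ where $c_v(p)$ denotes the coefficient of the word $v$. This is clearly $k$-linear on $k\langle\underline{X}\rangle_{2d}$.

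Next compute $\mu$ on a single square. Write $g=\sum_{|\nu|\le d} a_\nu \nu$; then
\[
g^*g=\sum_{\nu,\nu'} \overline{a_\nu} a_{\nu'}\, \nu^*\nu'.
\]
The key combinatorial fact is the following: if $\nu^*\nu'=\omega^*\omega$ with $|\nu|,|\nu'|,|\omega|\le d$, then $\nu=\nu'$ need not hold in general. For example, $\nu^*=\omega^*\omega$ with $\nu'$ empty occurs when $|\omega^*\omega|\le d$. So the naive trace functional may pick up cross terms, and I expect this to be the main obstacle.

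To fix it, I would instead work on the Gram-matrix level. Choose the functional as a Hankel-type moment functional coming from an honest positive construction. Take the tuple $\underline{B}$ from the remark after Theorem~\ref{main}, namely truncated creation-plus-annihilation operators on a space with orthonormal vectors $e_\omega$ for $|\omega|\le d$, and set
\[
\mu(p)\coloneq\langle p(\underline{B})e_\emptyset, e_\emptyset\rangle .
\]
Linearity is immediate. Moreover
\[
\mu(g^*g)=\|g(\underline{B})e_\emptyset\|^2\ge0 .
\]
That remark shows $\omega(\underline{B})e_\emptyset=e_\omega+(\text{lower-degree terms})$, so the vectors $\{\omega(\underline{B})e_\emptyset : |\omega|\le d\}$ are linearly independent. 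Hence $g(\underline{B})e_\emptyset\ne0$ for every nonzero $g$ with $\deg g\le d$. (In the real case the operators are real matrices and $\mu$ is real-valued on symmetric elements.)

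Finally, let $\sigma=\sum_i g_i^*g_i\in\SoS^d$ be nonzero. Some $g_i$ must be nonzero, so $\mu(\sigma)=\sum_i\|g_i(\underline{B})e_\emptyset\|^2>0$. The only delicate step is the triangularity claim $\omega(\underline{B})e_\emptyset=e_\omega+$lower terms. I would verify it by induction on $|\omega|$: applying $B_i=L_i+L_i^*$ to $e_\nu$ with $|\nu|<d$ gives $e_{x_i\nu}$ plus a vector of degree $|\nu|-1$ (or $0$), so the top component $e_\omega$ appears with coefficient $1$. Here the word ordering must be matched to the convention that $\omega(\underline{B})$ applies the rightmost letter first. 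If the paper's indexing reverses letters, I would replace $e_\omega$ by $e_{\omega^*}$, which does not affect independence.
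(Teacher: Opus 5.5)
Your final argument is correct, but it takes a different route from the paper.

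\textbf{The paper's route.} The paper proves the lemma by induction on $d$. It starts from $\mu_0(a)=a$ and extends $\mu_d$ to $\mu_{d+1}$ as follows: old values are kept on words of degree $\le 2d$, each word $v^*v$ with $\deg(v)=d+1$ is assigned a constant $c$, and all other new words get $0$. If $|\nu|=|\nu'|=d+1$, then $\nu'^*\nu=v^*v$ forces $\nu=\nu'$, so the new diagonal block of the moment matrix is $cI$. A Schur-complement argument then gives positive definiteness for $c$ large.

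\textbf{Your route.} You write down a closed-form functional instead: the vector state
$\mu(p)=\langle p(\underline{B})e_\emptyset,e_\emptyset\rangle$ of the truncated creation-plus-annihilation tuple from the remark after Theorem~\ref{main}. Positivity is automatic from $\mu(g^*g)=\|g(\underline{B})e_\emptyset\|^2$. Faithfulness reduces to the triangularity $\omega(\underline{B})e_\emptyset=e_\omega+(\text{lower terms})$, which your induction correctly establishes.

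Your worry about ordering is unfounded. $L_i$ prepends $x_i$, and $\omega(\underline{B})$ applies the rightmost letter first, so the top term is exactly $e_\omega$ and no reversal is needed. Also, the span of the $e_\omega$ with $|\omega|\le d$, which has dimension $N(d)$, already suffices.

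\textbf{What each approach buys.} The paper's proof stays entirely at the level of moment matrices and needs no operator model, but it leaves the choice of $c$ implicit. Yours avoids any constant-choosing step. It also directly produces the tuple required in Lemma~\ref{Lemmac}, since $p(\underline{B})e_\emptyset\neq 0$ for every nonzero $p$ of degree $\le d$. That makes the detour through Lemma~\ref{lemmaB} unnecessary for that purpose.

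\textbf{One correction to the write-up.} Drop your first candidate $\mu(p)=\sum_{|\omega|\le d}c_{\omega^*\omega}(p)$ entirely, because it genuinely fails. For $d=2$ and $g=1-X_1^2$ one gets $\mu(g^*g)=1-2+1=0$.
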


\begin{proof}
            Proof by induction.  $\mu_0 : k\langle\underline{X}\rangle_{0}   \cong  k \to k$ defined by 
\[
\mu_0(a) = a.
\]
Suppose $\mu_d : k\langle\underline{X}\rangle_{2d} \to k$ is a linear functional such that
\[
\mu_d(\SoS^{d}\setminus0) > 0.
\]
Define an extension $\mu_{d+1} : k\langle\underline{X}\rangle_{2d+2} \to k$ of $\mu_d$, as follows.
\[
\begin{cases}
\mu_{d+1}(\omega)=\mu_d(\omega) &\text{ if } \deg (\omega) \le 2d, \\
\mu_{d+1}(\omega)=c &\text{ if } \omega=v^*v \text{, where }\deg (v) = d+1, \\
\mu_{d+1}(\omega) = 0 &\text{ otherwise. }\\
\end{cases}
\]
Since the form $\langle p, q \rangle_d = \mu_d(q^*p)$ is (strictly) positive definite on $k\langle\underline{X}\rangle_{d}$, there exists a real number $c > 0$ such that $\langle p, q\rangle_{d+1} = \mu_{d+1}(q^*p)$
is positive definite on $k\langle\underline{X}\rangle_{d+1}$.
\end{proof}

\begin{lemma}\label{lemmaB}
    Suppose $\mu : k\langle\underline{X}\rangle_{2d+2} \to k$ is a linear functional. If $\mu(\SoS^{d+1}\setminus0) > 0$, then there exists a Hilbert space $\mathcal{K}$ of dimension $N(d)$, a vector $\gamma \in \mathcal{K}$, and a self-adjoint operator tuple $\underline{\mathcal{X}}\in B(\mathcal{K})^m$ such that
\[
\langle p(\underline{\mathcal{X}})\gamma,\, q(\underline{\mathcal{X}})\gamma \rangle = \mu(q^*p)
\]
for all $p, q\in k\langle\underline{X}\rangle_{d}$.
\end{lemma}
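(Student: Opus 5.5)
This is the truncated GNS construction. I would build $\mathcal{K}$ as a quotient-free Hilbert space on polynomials of degree at most $d+1$, and then compress multiplication operators to degree $\le d$. Concretely, on $V:=k\langle\underline{X}\rangle_{d+1}$ I would define the sesquilinear form
\[
\langle p,q\rangle := \mu(q^*p).
\]
The product $q^*p$ has degree at most $2d+2$, so $\mu(q^*p)$ is defined. The form is Hermitian: for $k=\mathbb{C}$, and for $k=\mathbb{R}$ in the real sense, this follows from $\mu(h^*)=\overline{\mu(h)}$. I would obtain this symmetry from the fact that $\mu$ is real on $\Sym$, which is how $\mu$ was built in the proof of Theorem~\ref{main}; if it is not part of the hypothesis, I would first symmetrize $\mu$, since $\mu(\SoS^{d+1}\setminus 0)>0$ only concerns symmetric elements. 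Positivity on nonzero $\SoS^{d+1}$ makes the form strictly positive definite, because $\langle p,p\rangle=\mu(p^*p)>0$ for $p\neq0$. Hence $V$ is already a Hilbert space, with no null vectors to quotient out.

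Next I would take $\mathcal{K}:=k\langle\underline{X}\rangle_d\subseteq V$ with the restricted inner product, so that $\dim\mathcal{K}=N(d)$. Set $\gamma:=1$, and let $P$ be the orthogonal projection of $V$ onto $\mathcal{K}$. For each $i$ I would define $\mathcal{X}_i:\mathcal{K}\to\mathcal{K}$ by $\mathcal{X}_i p:=P(X_i p)$; this makes sense because $X_ip\in V$ whenever $\deg p\le d$. Self-adjointness is then immediate. For $p,q\in\mathcal{K}$,
\[
\langle \mathcal{X}_i p,q\rangle=\langle X_ip,q\rangle=\mu(q^*X_ip)=\mu((X_iq)^*p)=\langle p,X_iq\rangle=\langle p,\mathcal{X}_iq\rangle,
\]
using $X_i^*=X_i$ and the fact that $q\in\mathcal{K}$, which lets the projection be dropped.

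The key step, and the only real obstacle, is to show that $\omega(\underline{\mathcal{X}})\gamma=\omega$ for every word of degree at most $d$. I would prove this by induction on the length of $\omega$. Write $\omega=X_i\nu$ with $\deg\nu\le d-1$. The induction hypothesis gives $\nu(\underline{\mathcal{X}})1=\nu$, so $\omega(\underline{\mathcal{X}})1=\mathcal{X}_i\nu=P(X_i\nu)=X_i\nu=\omega$, since $X_i\nu$ already lies in $\mathcal{K}$. The projection matters only for the top degree, and it never intervenes while we stay at degree $\le d$. This is exactly why the hypothesis uses $\SoS^{d+1}$ while the conclusion concerns degree $d$.

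By linearity, $p(\underline{\mathcal{X}})\gamma=p$ for all $p\in k\langle\underline{X}\rangle_d$, and therefore
\[
\langle p(\underline{\mathcal{X}})\gamma,q(\underline{\mathcal{X}})\gamma\rangle=\langle p,q\rangle=\mu(q^*p).
\]
The operators are bounded automatically because $\mathcal{K}$ is finite dimensional, and for $k=\mathbb{R}$ the same argument goes through verbatim.
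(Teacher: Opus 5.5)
Your proposal is correct and follows essentially the same route as the paper: the GNS form $\langle p,q\rangle=\mu(q^*p)$ on $k\langle\underline{X}\rangle_{d+1}$, the subspace $\mathcal{K}=k\langle\underline{X}\rangle_d$ with $\gamma=1$, and the compressed multiplication operators $\mathcal{X}_i=P_{\mathcal{K}}X_iP_{\mathcal{K}}$. Your induction proving $\omega(\underline{\mathcal{X}})\gamma=\omega$ for $\deg\omega\le d$ and your Hermitian-symmetry check supply details the paper leaves implicit; the symmetry is automatic over $\mathbb{C}$ by polarization, but over $\mathbb{R}$ it is a genuine extra requirement $\mu(h^*)=\mu(h)$, which the $\mu$ constructed in the proof of Theorem~\ref{main} satisfies.
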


\begin{proof}
        Let $\mathcal{K'}$ denote the Hilbert space obtained by defining the inner product $\langle p, q \rangle = \mu(q^* p)$ on $k\langle\underline{X}\rangle_{d+1}$ as in the GNS construction.
        The \cyan{condition of}  $\mu$ guarantees that there are no null vectors. Let $\mathcal{K}$ denote the span of $k\langle\underline{X}\rangle_d$ in $\mathcal{K'}$, and let $\mathcal{N}$ denote its orthogonal complement. Define the operators $S_j$ by $S_j p = X_j p$  for all $p \in \mathcal{K}$, and $S_j p = 0$ for all $p \in \mathcal{N}$. Let $P_{\mathcal{K}}$ denote the orthogonal projection of $\mathcal{K'}$ onto $\mathcal{K}$, 
        and define $\mathcal{X}_j = P_{\mathcal{K}} S_j P_{\mathcal{K}}$.\\
        It follows that for $p, q \in \mathcal{K}$, 
        \[\langle \mathcal{X}_j p, q \rangle = \langle S_j p, q \rangle = \langle X_j p, q \rangle = \langle p, X_j q \rangle = \langle p, \mathcal{X}_j q \rangle,\]
       Thus, $\mathcal{X}_j = \mathcal{X}_j^*$, and therefore $p(\underline{\mathcal{X}})\hat{1} = p$ for all $p \in k\langle\underline{X}\rangle_d$. 
        Furthermore, if $q \in k\langle\underline{X}\rangle^d$ as well, then $\langle p(\underline{\mathcal{X}})\hat{1}, q(\underline{\mathcal{X}})\hat{1} \rangle = \langle p, q \rangle$.
\end{proof}

\begin{lemma}\label{Lemmac}
    There exists a tuple $\underline{\mathcal{X}} = (\mathcal{X}_1, \ldots, \mathcal{X}_m)$ from $M_{{N(d)}}(k)$ such that if $p \in k\langle\underline{X}\rangle_d$ and $p(\mathcal{X}) = 0$, then $p = 0$.
\end{lemma}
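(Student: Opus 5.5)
We need to prove Lemma~\ref{Lemmac}: there is a tuple of $N(d)\times N(d)$ matrices on which no nonzero polynomial of degree at most $d$ vanishes. The paper's formulation (``$p(\mathcal{X})=0$'') together with its use in Lemma~\ref{LemmaE} calls for self-adjoint matrices, so we want self-adjoint $\mathcal{X}_j$.

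We will reuse the construction in the remark following Theorem~\ref{main}. Take $\mathcal{K}=k^{N(d)}$ with an orthonormal basis $\{e_\omega : \deg(\omega)\le d\}$ indexed by the words of degree at most $d$, with $e_\emptyset$ corresponding to the empty word. For each $j$, define the truncated left shift $L_j e_\omega = e_{X_j\omega}$ if $\deg(\omega)<d$ and $L_je_\omega=0$ if $\deg(\omega)=d$. Then set $\mathcal{X}_j := L_j+L_j^*$, which is self-adjoint.

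The key step is a triangularity claim: for every word $\omega$ with $\deg(\omega)=\ell\le d$,
\[
\omega(\underline{\mathcal{X}})e_\emptyset = e_\omega + (\text{a combination of } e_\nu \text{ with } \deg(\nu)<\ell).
\]
We prove this by induction on $\ell$. Write $\omega=X_j\omega'$. Then $\omega(\underline{\mathcal{X}})e_\emptyset=\mathcal{X}_j\,\omega'(\underline{\mathcal{X}})e_\emptyset$. Applying $L_j$ to $e_{\omega'}$ gives $e_{X_j\omega'}=e_\omega$, which is legal because $\deg(\omega')<d$. Applying $L_j$ to the lower terms $e_\nu$ with $\deg(\nu)<\ell-1$ gives vectors of degree below $\ell$. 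Finally, $L_j^*$ maps $e_\nu$ to $e_{\nu'}$ when $\nu=X_j\nu'$, and to $0$ otherwise, so it only lowers degree. This last point, that the adjoint part never produces top-degree terms, is the only place needing care, and it follows directly from the definition of $L_j$.

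Consequently, for $p=\sum_{\deg\omega\le d}\alpha_\omega\omega$, the vector $p(\underline{\mathcal{X}})e_\emptyset$ has the following form in the basis $\{e_\omega\}$ ordered by degree: its coefficients are obtained from $(\alpha_\omega)$ by a unitriangular change of basis. Suppose $p\neq0$, and let $\ell=\deg(p)$. Pick a word $\omega$ with $\deg(\omega)=\ell$ and $\alpha_\omega\ne0$. The coefficient of $e_\omega$ in $p(\underline{\mathcal{X}})e_\emptyset$ is exactly $\alpha_\omega$: any other word $\omega''$ of degree $\ell$ in $p$ contributes $e_{\omega''}\neq e_\omega$, and words of lower degree contribute only lower $e_\nu$. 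Hence $p(\underline{\mathcal{X}})e_\emptyset\neq0$, so $p(\underline{\mathcal{X}})\neq0$. Since $\dim\mathcal{K}=N(d)$, the matrices lie in $M_{N(d)}(k)$ and are self-adjoint, which proves the lemma.
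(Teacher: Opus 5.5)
Your proof is correct, but it takes a different route from the paper's. The paper deduces Lemma~\ref{Lemmac} from Lemmas~\ref{lemmaA} and~\ref{lemmaB}:
\begin{itemize}
\item Lemma~\ref{lemmaA} builds, by induction on the degree (choosing a large diagonal constant $c$ at each step), a linear functional $\mu$ that is strictly positive on nonzero sums of squares.
\item Lemma~\ref{lemmaB} then runs a truncated GNS construction on $k\langle\underline{X}\rangle_{d+1}$ with inner product $\mu(q^*p)$, compressing left multiplication by $X_j$ to $k\langle\underline{X}\rangle_d$.
\end{itemize}
This gives self-adjoint $\mathcal{X}_j$ with $p(\underline{\mathcal{X}})\hat{1}=p$ for $\deg p\le d$. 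So $p(\underline{\mathcal{X}})=0$ forces $p=0$, because the form has no null vectors.

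You instead write down explicit matrices: truncated creation operators plus their adjoints. This is the same tuple $\underline{B}$ the paper uses in the remark after Theorem~\ref{main} for its density argument. You get faithfulness from the unitriangularity of $p\mapsto p(\underline{\mathcal{X}})e_\emptyset$. Your checks that $L_j^*$ only lowers degree, and that $L_je_{\omega'}$ is never truncated when $\deg\omega'<d$, are exactly what is needed.

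Your argument is more elementary and concrete. The matrices are real symmetric $0/1$ matrices that work for both $k=\mathbb{R}$ and $k=\mathbb{C}$, and you avoid the positive-extension step of Lemma~\ref{lemmaA}. The paper's route has a different advantage: it reuses machinery needed anyway. In the proof of Theorem~\ref{main}, the GNS tuple must realize a \emph{prescribed} separating functional $\mu$ via $\langle p(\underline{\mathcal{X}})\hat{1},q(\underline{\mathcal{X}})\hat{1}\rangle=\mu(q^*p)$, which no fixed explicit tuple can do. Lemma~\ref{lemmaB} is therefore indispensable there, and Lemma~\ref{Lemmac} then comes essentially for free.
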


\begin{proof}
   By Lemma~\ref{lemmaA} and Lemma~\ref{lemmaB}.
\end{proof}

\begin{proof}[Proof of Lemma~\ref{LemmaD}]
         By Lemma~\ref{Lemmac}, there exists a tuple $\underline{\mathcal{X}} = (\mathcal{X}_1, \ldots, \mathcal{X}_m)$ from $B(k^{N(2d)})$ such that if $p \in k\langle\underline{X}\rangle_{2d}$ and $p(\underline{\mathcal{X}}) = 0$, then $p = 0$.\\
        \cyan{Define $\|p\|_{\underline{\mathcal{X}}} \coloneq \|p(\underline{\mathcal{X}})\|$, then $\|p\|_{\underline{\mathcal{X}}}$ is a norm on $k\langle\underline{X}\rangle_{2d}$}.
        
        Suppose a sequence of noncommutative $\SoS^d$ polynomials $\{p_n \}\in k\langle\underline{X}\rangle_{2d}$ converges to $p \in k\langle\underline{X}\rangle_{2d}$. 
        Then $p_n(\underline{\mathcal{X}})$ converges to $p(\underline{\mathcal{X}})$, so that the sequence $\{p_n(\underline{\mathcal{X}})\}$ is bounded. Since the set $\SoS^d$ is a convex set in the real linear
        space $\Sym k\langle\underline{X}\rangle_{2d}$,
        by Carathéodory’s Theorem\cite{barvinok2025course}, we can \cyan{assume} 
       \[p_n = \sum_{j=1}^{N(2d)+1} r_{j,n}^* r_{j,n}.\]
       Evaluating this \cyan{SoS} form at $\underline{\mathcal{X}}$, one concludes that each sequence $\{r_{j,n}(\underline{\mathcal{X}})\}_n$ is bounded, and thus, by passing to a subsequence, \cyan{for $j = 1, 2, \dots, N(2d)+1$}, there exists $r_j \in k\langle\underline{X}\rangle_d$ such that $r_{j,n}(\underline{\mathcal{X}})$ converges to $r_j(\underline{\mathcal{X}})$. 
      Therefore, the sequence $\{p_n\}$ converges to a noncommutative $\SoS$ polynomial $\sum^{N(2d)+1}_{j=1} r_j^* r_j$.
      \end{proof}
\end{document}